\newenvironment{definition}[1][Definition]
{\begin{trivlist}
\item[\hskip \labelsep {\bfseries #1}]}{\end{trivlist}}
\newtheorem{theorem}{Theorem}[section]
\newtheorem{proposition}{Proposition}[section]
\newtheorem{lemma}{Lemma}[section]
\def\rz{\mathbb{R}}
\def\O{\Omega}
\def\p{\partial}
\def\s{\sigma}
\def\<{\langle}
\def\>{\rangle}
\begin{document}

\title{The Neumann Problem for Hessian Equations}
\author{Xinan Ma \footnote{Department of Mathematics, University of Science and Technology of China,  Hefei , P. R. China \quad Email: xinan@ustc.edu.cn. }    \and Guohuan Qiu \footnote{Department of Mathematics, University of Science and Technology of China,  Hefei , P. R. China \quad Email: guohuan@mail.ustc.edu.cn.}}

\date{}
\maketitle

\abstract{}
In this paper, we prove the existence of a classical solution to a Neumann boundary problem for Hessian equations in uniformly convex domain. The methods depend upon the established of a priori derivative estimates up to second order. So we give a affirmative answer to a conjecture of N. Trudinger in 1986.

{\bf Keywords:} Hessian equations, Neumann problem, Uniformly convex domain.

\section{Introduction}
  In this paper, we study the existence of the classical solution for the following  Neumann problem:
\begin{equation}\label{EMQX}
\left\{
\begin{array}{rccl}
\s_k (D^2 u) &=& f(x,u) & \hbox{ in } \Omega,\\
u_\nu&=&\varphi(x,u)  &\hbox{ on } \partial \Omega,\\
\end{array}
\right.
\end{equation}
where $\s_k(D^2 u)$ is the $k$-th elementary symmetric function of eigenvalues of $D^2 u$, and $\nu$ is outer unit normal vector of $\partial \Omega$.
When $k=1$, this is well-known Laplace equation with Neumann boundary condition, for priori estimates and the existence theorem we refer to the book \cite{GT}. For $k=n$, the priori estimates and existence result were obtained by Lions, Trudinger and Urbas \cite{LTU}. But for $2\leq k \leq n-1$, Trudinger \cite{Tru} established the existence theorem when the domain is a ball, and he conjectured (in \cite{Tru}, page 305) that  one can solve the problem  in sufficiently smooth uniformly convex domains.
Now we give a positive answer to this problem.
\begin{theorem}\label{main}
  Let $\Omega$ be a $C^4$ bounded uniformly convex domain in $\rz^n$. Where $f\in C^{2}(\overline{\Omega})$ is positive function and $\varphi \in C^{3}(\overline{\Omega})$. Then there exists a unique $k$ admissible solution $u \in C^{3,\alpha}(\overline{\Omega})$ of the boundary value problem,
\begin{equation}\label{MQ}
\left\{
\begin{array}{rccl}
\s_k (D^2 u) &=& f(x) & \hbox{ in } \Omega,\\
u_\nu&=&-u + \varphi(x)  &\hbox{ on } \partial \Omega.\\
\end{array}
\right.
\end{equation}
\end{theorem}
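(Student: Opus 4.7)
The plan is to prove Theorem \ref{main} by the classical continuity method, reduced to a priori estimates in $C^{2,\alpha}(\overline{\Omega})$; once those are available, Evans--Krylov regularity and Schauder theory upgrade the solution to $C^{3,\alpha}$ and the linear Fredholm alternative for the oblique problem closes the loop. I would embed (\ref{MQ}) into a one-parameter family such as $\sigma_k(D^2 u^t)=tf+(1-t)c_k$ in $\Omega$ with $u^t_\nu=-u^t+t\varphi$ on $\partial\Omega$, where $c_k$ is chosen so that $u^0=\tfrac12|x|^2+\mathrm{const}$ is an explicit admissible solution at $t=0$. On admissible solutions the linearization $Lv=\sigma_k^{ij}(D^2u)v_{ij}$ is uniformly elliptic, and the Robin-type boundary operator $v_\nu+v$ is oblique with a coefficient of the right sign for the maximum principle, so openness of the set of solvable $t$ follows from the implicit function theorem in $C^{2,\alpha}$.

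For the $C^0$ bound, at an interior maximum $D^2u\le 0$ gives $\sigma_1(D^2u)\le 0$, contradicting admissibility; therefore the maximum lies on $\partial\Omega$, where $u_\nu\ge 0$ forces $u\le\varphi$. By Newton--MacLaurin, $\Delta u=\sigma_1\ge c_k f^{1/k}>0$, so the minimum is also on $\partial\Omega$, giving $u\ge\varphi$; applied to the difference of two admissible solutions, the same argument yields uniqueness. For $|Du|$, the normal derivative on $\partial\Omega$ is controlled directly by the boundary condition, and for tangential derivatives I would apply the maximum principle to an auxiliary function of the form $|Du|^2+\alpha u$, reducing the interior bound to a boundary one, then differentiate $u_\nu+u=\varphi$ tangentially to finish.

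The heart of the argument is the second order estimate. Near a boundary point pick a local orthonormal frame $\{e_1,\dots,e_{n-1},\nu\}$ with $e_\alpha$ tangential. The tangential--tangential block $u_{\alpha\beta}|_{\partial\Omega}$ is recovered by differentiating $u_\nu+u=\varphi$ twice along $\partial\Omega$ and invoking the second fundamental form. For the mixed terms $u_{\alpha\nu}$ I would construct a barrier of the form $\pm T(u)+A\rho+B\rho^2+C|x|^2$, with $\rho$ the signed distance to $\partial\Omega$ and $T$ a tangential operator encoding the boundary condition, and exploit uniform convexity to beat the commutator errors generated by $[L,T]$. The truly delicate piece, and the expected main obstacle, is the pure normal derivative $u_{\nu\nu}|_{\partial\Omega}$: the equation alone does not control it, because large tangential eigenvalues could a priori let $u_{\nu\nu}$ blow up without violating $\sigma_k(D^2u)=f$. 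The plan is to bound $M:=\max_{\partial\Omega}u_{\nu\nu}$ at the point where $M$ is attained by expanding $\sigma_k(D^2u)=f$ along the normal direction, which isolates $u_{\nu\nu}$ times the tangential minor $\sigma_{k-1}$, and splitting into cases: in the non-degenerate case a positive lower bound on that minor bounds $u_{\nu\nu}$ directly; in the degenerate case one needs an auxiliary function built from the signed distance and tailored to the uniform convexity of $\partial\Omega$. This dichotomy, inspired by Lions--Trudinger--Urbas but genuinely harder in the intermediate range $2\le k\le n-1$ since $\sigma_k$ is neither linear nor of Monge--Amp\`ere determinant type, is where uniform convexity of $\Omega$ must enter essentially.

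Finally, with the boundary $C^2$ bound in hand, the interior $C^2$ estimate follows from a standard maximum principle argument on $\log\lambda_{\max}(D^2u)+h(|Du|^2)+\gamma u$ in the spirit of Chou--Wang. The equation is then uniformly elliptic along the solution, so Evans--Krylov and its oblique boundary version give $C^{2,\alpha}$, and Schauder estimates bootstrap to $C^{3,\alpha}$, completing closedness and the continuity method. The decisive step is Step 3, and within it the pure normal--normal estimate $u_{\nu\nu}|_{\partial\Omega}$ --- the very obstruction that kept Trudinger's 1986 conjecture open.
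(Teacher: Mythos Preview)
Your overall architecture (continuity method reduced to a priori $C^{2,\alpha}$ estimates, then Evans--Krylov/Schauder) matches the paper, and your $C^0$ and uniqueness arguments are fine. The genuine gap is in the second-order boundary estimate, where you have the logical structure of the Dirichlet problem rather than the Neumann problem.

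Specifically, your claim that ``the tangential--tangential block $u_{\alpha\beta}|_{\partial\Omega}$ is recovered by differentiating $u_\nu+u=\varphi$ twice along $\partial\Omega$'' is incorrect: two tangential derivatives of the Neumann condition produce a relation between $u_{\alpha\beta}$ and the \emph{third} derivative $u_{\alpha\beta\nu}$ (plus curvature terms), not a bound on $u_{\alpha\beta}$ itself. The paper stresses this explicitly: for Neumann data one does \emph{not} know the boundary double-tangential bound a priori. Conversely, the mixed derivatives $u_{\tau\nu}$, for which you propose a barrier, are in fact the easy part --- a single tangential differentiation of $u_\nu=-u+\varphi$ bounds them directly (the paper's Lemma~\ref{tn}).

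Because of this, your plan for $u_{\nu\nu}$ --- expand $\sigma_k(D^2u)=f$ at a boundary point and divide by the tangential minor $\sigma_{k-1}$ --- cannot get started: you have no lower bound on that minor without already controlling the tangential Hessian. This is precisely why the Lions--Trudinger--Urbas argument does not extend verbatim to $2\le k\le n-1$. The paper's route is the reverse of yours: first prove (Lemma~\ref{gl}) that \emph{all} second derivatives satisfy $u_{\xi\xi}\le C_0(1+M)$ with $M=\sup_{\partial\Omega}|u_{\nu\nu}|$, via a maximum-principle argument for $u_{\xi\xi}-v'(x,\xi)+K_1|x|^2+K_2|Du|^2$; then bound $M$ itself by constructing sub/super barriers $P,\overline P$ of the form $g(x)\bigl(Du\cdot Dh-\psi\bigr)\mp(A+\tfrac12 M)h$ in a boundary strip, where $h=-d+d^2$. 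The case analysis for $F^{ij}P_{ij}$ (Cases~1--4 in the paper) repeatedly feeds the already-established inequality $u_{\xi\xi}\le C_0(1+M)$ back in, and the coefficient $\tfrac12 M$ is what finally closes the loop to give $M\le C$. This interplay --- reduce everything to $M$, then build a barrier whose strength scales with $M$ --- is the missing idea in your proposal and is exactly where uniform convexity of $\partial\Omega$ is consumed.
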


{\bf{Remark 1:}} For simplicity we only states this particular form of existence theorem,  due to the $C^0$ estimate is easy to handle in this case while we do not want to emphasize $C^0$ estimate in this paper (see \cite{LTU} for more general cases).

 Hessian equation is an important nonlinear elliptic partial differential equation.
It  appears naturally in classical geometry, conformal geometry and
K\"ahler geometry. Now let us brief recall some history and
development for this equation, for more detail please see the
paper by Wang~\cite{W}.

First for the Hessian equation on $R^n$, its Dirichlet boundary
value problem
\begin{equation}\label{Dirichlet}
\left\{
\begin{array}{rccl}
\s_k (D^2 u) &=& f(x) & \hbox{ in } \Omega,\\
u&=& \varphi(x)  &\hbox{ on } \partial \Omega,
\end{array}
\right.
\end{equation}
was studied by
Caffrelli-Nirenberg-Spruck~\cite{CNS2},
Ivochkina~\cite{Ivo85} and Trudinger~\cite{T1}. Chou-Wang~\cite{CW} got the Pogorelov
type interior estimates and the existence of variational solution.
Trudinger-Wang~\cite{TW} developed a Hessian measure theory for
Hessian operator.

  For the  curvature equations in classical geometry,  the existence of hypersurfaces with
prescribed Weingarten curvature was studied by  Pogorelov~\cite{Po75}, Caffarelli-Nirenberg-Spruck~\cite{CNS4, CNS5}, Guan-Guan~\cite{GG}, Guan-Ma~\cite{GM} and the later work by Sheng-Trudinger-Wang~\cite{STW1}. The Hessian equation on Riemannian manifolds  was also studied by
Y.Y.Li~\cite{LiY90}, Urbas~\cite{U02} and Guan~\cite{GB}. In recent years the Hessian type equation also appears in conformal geometry, which started from
Chang-Gursky-Yang~\cite{CGY} and the related development by (\cite{GW}, \cite{LL}, \cite{GV}, \cite{STW},\cite{GeW}). In K\"ahler geometry, the Hessian equation was studied by Hou-Ma-Wu~\cite{HMW} and Dinew-Kolodziej ~\cite{DK}.

  The  Yamabe problem on manifolds with boundary was first studied by Escobar~\cite{E}, he shows that (almost) every compact Riemannian manifold $(M,g)$ is conformally equivalent to one of constant scalar curvature, whose boundary is minimal.  The problem reduces to solving the semilinear elliptic critical Sobolev exponent equation  with the Neumann boundary condition. It is naturally, the Neumann boundary value problem for Hessian type equations also appears in the fully nonlinear Yamabe problem for manifolds with boundary, which is to find a conformal metric $\hat{g} = \exp(-2u) g$ such that the $k$-th elementary symmetric function of eigenvalues of Schouten tensor is constant and with the constant mean curvature on the boundary of manifold. See for  Jin-Li-Li~\cite{JLL}, Chen~\cite{SC} and  Li-Luc~\cite{LL1,LL2}, but in all these papers they need to impose the manifold  are  umbilic or total geodesic boundary for $k \geq 2$, which are more like the  condition  in Trudinger~\cite{Tru} that the domain is ball.

  The Neumann or oblique derivative problem on linear and quasilinear elliptic equations was widely studied for a long time, one can see the recent book written by Lieberman~\cite{Lieb13}. Especially for the mean curvature equation with prescribed contact angle boundary value problem,  Ural'tseva \cite{Ur73}, Simon-Spruck \cite{SS76} and Gerhardt \cite{Ger76} got the boundary gradient estimates and the corresponding existence theorem. Recently in \cite{MX}, Ma-Xu got the boundary gradient estimates and the corresponding existence theorem for the Neumann boundary value problem on mean curvature equation. For related results on the Neumann or oblique derivative problem for some class fully nonlinear elliptic equations can be found in Urbas~\cite{Urbas, Urbas2}.

     We give a brief description of our procedures and ideas to this problem. By the standard theory of  Lieberman-Trudinger~\cite{LT86} (see also \cite{LionT}, \cite{Lieb13}), it is well known that the solvability of the Hessian equations with Neumann boundary value can be reduced to the priori global second order derivative estimates. We have done $C^1$ estimate (jointed with J.J. Xu) in \cite{MQX} a year ago, there we constructed a suitable auxiliary function and use particular coordinate to let the estimate computable. For $C^2$ estimate, we first reduce the global estimate to the boundary double normal derivative, this estimate also plays an important role in our boundary double normal estimate. The main difficulty lies to construct the barrier functions of $u_{\nu}$. The Neumann boundary condition will bring us a trouble term as "$\sum\limits_{ijk} F^{ij}u_{ik}D_j \nu^{k}$". Motivated by Lions-Trudinger-Urbas \cite{LTU},
      Trudinger \cite{Tru},  Ivochkina- Trudinger-Wang~\cite{ITW} and Urbas \cite{Urbas}, we introduce a new barrier function,
  then we can extract a good term and  control this trouble term. For $C^0$ estimate, we deal with a particular form of $f$ and $\varphi$ as in \cite{Tru} for simplicity.

The rest of the paper is organized as follows. In section 2, we first give the definitions and some notations. We get the $C^0$ and $C^1$, which was obtained by Trudinger~\cite{Tru} and Ma-Qiu-Xu~\cite{MQX}. In section 4, we obtain the $C^2$ estimates, which is the main estimates in this paper. In last section 5, we prove the main Theorem~\ref{main}.

{\bf Acknowledgment:}  The both authors would
like to thank the helpful discussion and encouragement from Prof. X.-J. Wang. The first author would also thank Prof. P. Guan, Prof. N. Trudinger and Prof. J. Urbas for their interesting and encouragement. Research of the first author was supported by  NSFC. The second author was supported by the grant from USTC. Both authors was supported by Wu Wen-Tsun Key Lab.

\section{Preliminaries}

  In this section, we introduce the admissible solution and some element properties for $k$-th elementary symmetric function.
\begin{definition}\label{DF2.1}
For any $k= 1,2,\cdots,n$, and $\lambda = (\lambda_1,\cdots,\lambda_n) \in \rz^n$ we set
\begin{equation}\label{}
  \sigma_k(\lambda) = \sum_{1\leq i_1 < \cdots < i_k \leq n} \lambda_{i_1} \lambda_{i_2} \cdots \lambda_{i_k}.
\end{equation}
We denote by $\sigma_k(\lambda | i)$ the symmetric function with $\lambda_i = 0$ and $\sigma_k(\lambda|ij)$ the symmetric function with $\lambda_i = \lambda_j =0$.
Let $\lambda (D^2 u)$ be the eigenvalue of  $D^2 u$ and $\sigma_k(D^2 u) = \sigma_k(\lambda(D^2 u))$.
And we let
\begin{equation}\label{}
\Gamma_k = \{(\lambda_1, \cdots, \lambda_n)\in \rz^n | \indent \sigma_j (\lambda) >0 \indent \forall \indent j=1,\cdots,k\}.
\end{equation}

We say a function $u$ is $k$ admissible if $\lambda(D^2 u) \in \Gamma_k$.
\end{definition}

Denote
 $F^{ij}:=\frac{\partial\sigma_k(D^2u)}{\partial u_{ij}}$, $\mathcal{F}:=\sum\limits_{1\leq i\leq n}F^{ii}$.
 Sometimes we write the equation \eqref{EMQX} in the form
\begin{equation}\label{EV}
\widetilde{F}(D^2 u) := \sigma_k^{\frac{1}{k}}(D^2 u)=f^{\frac{1}{k}}=:\widetilde{f},
\end{equation}
and use the notation
\begin{equation}\label{}
\widetilde{F}^{ij} := \frac{\partial \widetilde{F}}{\partial u_{ij}}, \hbox{  }
\widetilde{F}^{ij,pq}:= \frac{\partial^2 \widetilde{F}}{\partial u_{ij} \partial u_{pq}}.
\end{equation}
$\sigma_k$ operator has following simple properties.
\begin{proposition}
\begin{equation}\label{8}
\sigma_k (\lambda) = \sigma_k(\lambda|i) + \lambda_i \sigma_{k-1}(\lambda|i), \forall 1\leq i \leq n,
\end{equation}
\begin{equation}\label{in4}
F^{ij} u_{ij}= k \sigma_k,
\end{equation}

and
\begin{equation}\label{in5}
 \mathcal{F}=(n-k+1)\sigma_{k-1}.
\end{equation}
\end{proposition}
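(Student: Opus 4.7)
The plan is to prove the three identities by reducing to a diagonal frame and doing elementary bookkeeping with the defining sum of $\sigma_k$. Since $F^{ij}=\partial \sigma_k/\partial u_{ij}$ is an invariant tensorial expression, at any fixed point I may choose orthonormal coordinates so that $D^2 u$ is diagonal with eigenvalues $\lambda_1,\ldots,\lambda_n$; in such a frame $F^{ij}=\sigma_{k-1}(\lambda|i)\,\delta_{ij}$, and the problem reduces to pure identities for $\sigma_k$.

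For \eqref{8}, I would simply split the defining sum $\sigma_k(\lambda)=\sum_{i_1<\cdots<i_k}\lambda_{i_1}\cdots\lambda_{i_k}$ according to whether the fixed index $i$ appears in the chosen $k$-subset. The terms with $i\notin\{i_1,\ldots,i_k\}$ give precisely $\sigma_k(\lambda|i)$, while the terms with $i\in\{i_1,\ldots,i_k\}$ factor as $\lambda_i$ times the sum of $\lambda_{j_1}\cdots\lambda_{j_{k-1}}$ over $(k-1)$-subsets of $\{1,\ldots,n\}\setminus\{i\}$, i.e.\ $\lambda_i\,\sigma_{k-1}(\lambda|i)$.

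For \eqref{in4}, after diagonalizing I would write $F^{ij}u_{ij}=\sum_i \lambda_i\,\sigma_{k-1}(\lambda|i)$. Two equally easy routes then finish the identity: either invoke Euler's relation on the $k$-homogeneous polynomial $\sigma_k$, noting that $\partial\sigma_k/\partial\lambda_i=\sigma_{k-1}(\lambda|i)$; or sum \eqref{8} over $i$ and use the counting observation $\sum_i\sigma_k(\lambda|i)=(n-k)\sigma_k(\lambda)$ (each $k$-subset omits $n-k$ indices) to deduce $\sum_i \lambda_i\,\sigma_{k-1}(\lambda|i)=k\sigma_k(\lambda)$.

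For \eqref{in5}, I would likewise write $\mathcal{F}=\sum_i F^{ii}=\sum_i \sigma_{k-1}(\lambda|i)$ in the diagonal frame and observe that each monomial $\lambda_{i_1}\cdots\lambda_{i_{k-1}}$ appearing in $\sigma_{k-1}(\lambda)$ occurs in $\sigma_{k-1}(\lambda|i)$ for exactly those $i\notin\{i_1,\ldots,i_{k-1}\}$, i.e.\ for $n-(k-1)=n-k+1$ values of $i$. The whole proposition is a package of combinatorial identities, so there is no real obstacle; the only point to be slightly careful about is that the diagonalization is a valid reduction because both sides of each identity are $O(n)$-invariant functions of $D^2u$.
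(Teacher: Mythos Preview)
Your argument is correct: the splitting of the defining sum for \eqref{8}, the Euler relation (or the equivalent counting $\sum_i\sigma_k(\lambda|i)=(n-k)\sigma_k$) for \eqref{in4}, and the multiplicity count for \eqref{in5} are exactly the standard derivations, and your remark that diagonalization is legitimate because both sides are $O(n)$-invariant functions of $D^2u$ handles the only nontrivial point. The paper itself does not give an argument at all---its proof of this proposition consists of the single line ``See \cite{Lie}''---so your proposal is strictly more informative than what the paper provides; effectively you have supplied the proof that the paper outsources to Lieberman's book.
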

\begin{proof}
See \cite{Lie}.
\end{proof}
\begin{proposition}
If $\lambda \in \Gamma_k$, then we have
\begin{equation}\label{inq11}
  \sigma_h(\lambda|i) >0, \indent \forall h<k \indent and \indent 1 \leq i \leq n,
\end{equation}
and $\sigma_k^{\frac{1}{k}}$  is  a  concave  function in  $\Gamma_k$.
\end{proposition}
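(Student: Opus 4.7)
The plan is to establish the two assertions in sequence, each using only the recurrence \eqref{8} together with the defining structure of the cone $\Gamma_k$.

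For the first assertion, I would proceed by induction on $k$. The base case $k=1$ gives $\sigma_0(\lambda|i)=1>0$ trivially. For the inductive step, the crucial substatement is that $\sigma_{k-1}(\lambda|i)>0$ for every $i$, because by \eqref{8} this quantity equals $\partial\sigma_k/\partial\lambda_i$, so the claim is precisely the ellipticity of $\sigma_k$ on $\Gamma_k$. I would prove it via the Garding-cone description of $\Gamma_k$ as the connected component of $\{\sigma_k>0\}$ containing the positive orthant: on that orthant all partials of $\sigma_k$ are obviously positive, and a standard hyperbolic-polynomial/connectedness argument (the directional derivative of $\sigma_k$ in any direction of $\overline{\Gamma_k}$ is itself a hyperbolic polynomial whose Garding cone contains $\Gamma_k$) rules out any vanishing in the interior of $\Gamma_k$. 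Once $\sigma_{k-1}(\lambda|i)>0$ is in hand, applying \eqref{8} successively at lower indices shows $(\lambda|i)\in\Gamma_{k-1}$ as an $(n-1)$-tuple, whereupon the inductive hypothesis delivers $\sigma_h(\lambda|i)>0$ for all $h\le k-1$.

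For the concavity of $\widetilde F=\sigma_k^{1/k}$, I would compute the Hessian of $\widetilde F$ in the eigenvalue variables. From $\partial_i\widetilde F=\tfrac{1}{k}\sigma_k^{1/k-1}\sigma_{k-1}(\lambda|i)$ and $\partial_j\sigma_{k-1}(\lambda|i)=\sigma_{k-2}(\lambda|ij)$, one obtains
\[
\partial_i\partial_j\widetilde F=\frac{\widetilde F}{k\,\sigma_k^2}\Bigl(\sigma_k\,\sigma_{k-2}(\lambda|ij)-\tfrac{k-1}{k}\,\sigma_{k-1}(\lambda|i)\,\sigma_{k-1}(\lambda|j)\Bigr),
\]
so concavity reduces to positive semidefiniteness on $\Gamma_k$ of the symmetric matrix
$\bigl[\tfrac{k-1}{k}\sigma_{k-1}(\lambda|i)\sigma_{k-1}(\lambda|j)-\sigma_k\,\sigma_{k-2}(\lambda|ij)\bigr]_{ij}$. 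This is the Newton--Maclaurin inequality for elementary symmetric functions. It follows from the first assertion (which guarantees that all the quantities involved are nonnegative so the relevant signs are controlled) together with Newton's classical inequality applied to the univariate polynomial $\prod_l(t+\lambda_l)$; alternatively one may invoke Garding's theorem on hyperbolic polynomials, which directly yields concavity of $P^{1/k}$ on the Garding cone of any hyperbolic polynomial $P$ of degree $k$.

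The main obstacle is the positivity of $\sigma_{k-1}(\lambda|i)$: it is not an algebraic consequence of $\sigma_1,\dots,\sigma_k>0$ and requires the global geometric description of $\Gamma_k$ as a Garding cone (equivalently, as the connected component of $\{\sigma_k>0\}$ containing the positive orthant). Once this step is in place, the remainder of the first assertion and the concavity both follow by essentially routine computation.
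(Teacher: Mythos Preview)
The paper does not supply a proof for this proposition; it simply refers the reader to Lieberman's book \cite{Lie}. Your proposal therefore goes well beyond what the paper does, and the route you sketch---ellipticity via G{\aa}rding's theory of hyperbolic polynomials for the first assertion, and either a direct Hessian computation reducing to Newton's inequalities or G{\aa}rding's concavity theorem for the second---is the standard argument one finds in the references.

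One step in your inductive argument is imprecisely phrased. You write that once $\sigma_{k-1}(\lambda|i)>0$ is established, ``applying \eqref{8} successively at lower indices shows $(\lambda|i)\in\Gamma_{k-1}$.'' The recursion \eqref{8} alone does not yield this, since $\lambda_i$ may be negative and the sign of $\sigma_j(\lambda|i)=\sigma_j(\lambda)-\lambda_i\sigma_{j-1}(\lambda|i)$ is then uncontrolled. What you actually need is simpler: by the paper's definition of $\Gamma_k$ one has $\Gamma_k\subset\Gamma_{k-1}$, so the inductive hypothesis applies directly to $\lambda\in\Gamma_{k-1}$ and gives $\sigma_h(\lambda|i)>0$ for all $h<k-1$, leaving only the top case $h=k-1$ to be handled by your G{\aa}rding argument. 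With this correction the induction is clean, and your outline is correct.
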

\begin{proof}
See \cite{Lie}.
\end{proof}
The following proposition is so called MacLaurin inequality.
\begin{proposition}
For $\lambda \in \Gamma_k$ and $k\geq l \geq 1$, we have
\begin{equation}\label{Mac}
  [\frac{\sigma_k (\lambda)}{C^k_n}]^{\frac{1}{k}} \leq [\frac{\sigma_l(\lambda)}{C^l_n}]^{\frac{1}{l}}.
\end{equation}
Moreover,
\begin{equation}\label{inqu10}
  \sum_{1}^n \frac{\partial \sigma_k^{\frac{1}{k}}(\lambda)}{\partial \lambda_i} \geq [C^k_n]^{\frac{1}{k}}.
\end{equation}
\end{proposition}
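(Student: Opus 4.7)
The plan is to establish the MacLaurin inequality via Newton's inequalities, and then derive the sum bound as a direct consequence.

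First I would recall Newton's inequalities for the normalized elementary symmetric functions. Set $p_j(\lambda) = \sigma_j(\lambda)/\binom{n}{j}$ with the convention $p_0 = 1$. Newton's inequality asserts that for any $\lambda\in\RR^n$ and any $1\le j\le n-1$,
\begin{equation*}
p_{j-1}(\lambda)\,p_{j+1}(\lambda) \le p_j(\lambda)^2.
\end{equation*}
The standard route is to consider the polynomial $P(t)=\prod_{i=1}^n(t-\lambda_i)$, whose roots are all real; by iterated Rolle's theorem, any derivative $P^{(m)}$ also has only real roots. Reducing by differentiation to a quadratic factor $at^2-2bt+c$ with real roots yields $b^2\ge ac$, which is exactly Newton's inequality on the relevant triple of coefficients.

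Next I would promote Newton's inequality to the monotonicity of $p_j^{1/j}$ on $\Gamma_k$. For $\lambda\in\Gamma_k$ the proposition \eqref{inq11} combined with \eqref{8} gives $\sigma_j(\lambda)>0$ for all $1\le j\le k$, so $p_j>0$ and we may take logarithms. Newton's inequality then says $j\mapsto \log p_j$ is concave on $\{0,1,\dots,k\}$, and since $\log p_0=0$, the chord-slope inequality forces $(\log p_j)/j$ to be decreasing in $j$. Hence $p_j^{1/j}$ is nonincreasing in $j$, and in particular
\begin{equation*}
\Bigl[\tfrac{\sigma_k(\lambda)}{\binom{n}{k}}\Bigr]^{1/k} = p_k^{1/k} \le p_l^{1/l} = \Bigl[\tfrac{\sigma_l(\lambda)}{\binom{n}{l}}\Bigr]^{1/l}
\end{equation*}
for every $1\le l\le k$, which is \eqref{Mac}.

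For the second inequality I would differentiate directly. Since $\partial\sigma_k/\partial\lambda_i=\sigma_{k-1}(\lambda|i)$ and $\sum_i\sigma_{k-1}(\lambda|i)=(n-k+1)\sigma_{k-1}$ (from \eqref{in5} applied to a diagonal matrix, or directly by counting),
\begin{equation*}
\sum_{i=1}^n \frac{\partial \sigma_k^{1/k}}{\partial \lambda_i}
= \frac{1}{k}\sigma_k^{\frac{1-k}{k}}\sum_{i=1}^n \sigma_{k-1}(\lambda|i)
= \frac{n-k+1}{k}\,\sigma_{k-1}\,\sigma_k^{\frac{1-k}{k}}.
\end{equation*}
Applying the MacLaurin inequality just proved (with $l=k-1$) to bound $\sigma_{k-1}$ from below by $\binom{n}{k-1}\binom{n}{k}^{-(k-1)/k}\sigma_k^{(k-1)/k}$, and using the identity $\frac{n-k+1}{k}\binom{n}{k-1}=\binom{n}{k}$, the $\sigma_k$-powers cancel and one is left with the sharp bound $\binom{n}{k}^{1/k}$, establishing \eqref{inqu10}.

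The only genuinely nontrivial ingredient is Newton's inequality itself; I expect that to be the main obstacle if one wants a fully self-contained argument, but it is entirely classical and can be invoked from any standard reference, after which the rest is bookkeeping with binomial coefficients.
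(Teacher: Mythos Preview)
Your argument is correct. The derivation of the MacLaurin inequality from Newton's inequalities via log-concavity of $j\mapsto \log p_j$ and the anchor $p_0=1$ is the standard route, and your computation for \eqref{inqu10} checks out line by line: the identity $\tfrac{n-k+1}{k}\binom{n}{k-1}=\binom{n}{k}$ makes the $\sigma_k$-powers cancel exactly as you claim.

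As for comparison with the paper: there is nothing to compare. The paper does not give a proof at all; it simply refers the reader to Lieberman's book \cite{Lie}. Your write-up is therefore strictly more detailed than what the paper offers, and the only external input you invoke (Newton's inequality) is itself entirely classical.
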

\begin{proof}
See \cite{Lie}.
\end{proof}

\begin{proposition}
Let $\lambda \in \Gamma_k$. Suppose that
\begin{equation*}\label{}
  \lambda_1 \geq \cdots \geq \lambda_k \geq \cdots \geq \lambda_n,
\end{equation*}
then we have
\begin{equation}\label{inqu12}
  \lambda_1 \sigma_{k-1}(\lambda|1) \geq  \frac{k}{n} \sigma_k(\lambda),
\end{equation}
for $\forall i <k$
\begin{equation}\label{inqu13}
  \sigma_{k-1}(\lambda|i) \geq \sigma_{k-1}(\lambda|k) \geq c(n,k) \sigma_{k-1}(\lambda)>0,
\end{equation}
and
\begin{equation}\label{inqu14}
  \lambda_1 \geq \lambda_2 \geq \cdots \geq \lambda_k > 0.
\end{equation}
\end{proposition}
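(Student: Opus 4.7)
I would prove the three parts in reverse of the stated order, since each uses the previous. Part (\ref{inqu14}) I would handle by induction on $k$: the base $k=1$ is immediate because $\sigma_1>0$ and $\lambda_1$ being the maximum forces $\lambda_1\geq\sigma_1/n>0$. For the step, $\lambda\in\Gamma_k\subset\Gamma_{k-1}$ and the induction hypothesis give $\lambda_1,\ldots,\lambda_{k-1}>0$. If one supposed $\lambda_k\leq 0$, then $\lambda_k,\ldots,\lambda_n\leq 0$ and every $k$-subset $I\subset\{1,\ldots,n\}$ must meet $\{k,\ldots,n\}$, so $\prod_{i\in I}\lambda_i$ has at least one non-positive factor while the remaining factors are drawn from the positive $\lambda_1,\ldots,\lambda_{k-1}$. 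Each monomial in $\sigma_k(\lambda)$ is therefore non-positive, forcing $\sigma_k\leq 0$ and contradicting $\sigma_k>0$.

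Part (\ref{inqu13}) rests on the displacement identity
$$\sigma_{k-1}(\lambda|i)-\sigma_{k-1}(\lambda|j)=(\lambda_j-\lambda_i)\,\sigma_{k-2}(\lambda|i,j),$$
obtained by expanding each side once in $\lambda_j$ and $\lambda_i$. Iterating the standard fact that $\lambda\in\Gamma_l$ implies $\lambda|m\in\Gamma_{l-1}$ gives $\lambda|i,j\in\Gamma_{k-2}$, hence $\sigma_{k-2}(\lambda|i,j)>0$ by (\ref{inq11}), and the sign of the displacement is dictated by $\lambda_j-\lambda_i$, yielding the monotonicity along the stated chain. For the final bound $\sigma_{k-1}(\lambda|k)\geq c(n,k)\sigma_{k-1}$, I would use (\ref{inqu14}): the single monomial $\lambda_1\cdots\lambda_{k-1}>0$ appears in $\sigma_{k-1}(\lambda|k)$, and a Maclaurin-type upper bound on $\sigma_{k-1}$ in terms of that monomial produces an explicit constant $c(n,k)>0$.

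The heart of the matter is (\ref{inqu12}). Writing $\sigma_k=\sigma_k(\lambda|1)+\lambda_1\sigma_{k-1}(\lambda|1)$ reduces the inequality to the equivalent form $(n-k)\lambda_1\sigma_{k-1}(\lambda|1)\geq k\,\sigma_k(\lambda|1)$. If $\sigma_k(\lambda|1)\leq 0$ this is trivial, since $\lambda_1\sigma_{k-1}(\lambda|1)=\sigma_k-\sigma_k(\lambda|1)\geq\sigma_k\geq\tfrac{k}{n}\sigma_k$. The delicate case is $\sigma_k(\lambda|1)>0$: the $(n-1)$-vector $\lambda|1$ is already in $\Gamma_{k-1}(\rz^{n-1})$ by restriction, and the added positivity places it in $\Gamma_k(\rz^{n-1})$. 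Setting $\tilde p_l:=\sigma_l(\lambda|1)/\binom{n-1}{l}$, Newton's inequalities make the sequence $l\mapsto\tilde p_l$ log-concave, so
$$\frac{\tilde p_k}{\tilde p_{k-1}}\leq\tilde p_1=\frac{\sigma_1-\lambda_1}{n-1}\leq\lambda_1,$$
the last step because $\lambda_1$ is the largest eigenvalue and hence $\sigma_1\leq n\lambda_1$. Unwinding the binomial factors returns exactly $(n-k)\lambda_1\sigma_{k-1}(\lambda|1)\geq k\sigma_k(\lambda|1)$. I expect the main obstacle to be precisely this Newton--Maclaurin step: one must recognize that the hypothesis $\lambda\in\Gamma_k$ combined with the nontrivial sign $\sigma_k(\lambda|1)>0$ pushes $\lambda|1$ into $\Gamma_k(\rz^{n-1})$, which is exactly where the log-concavity of the normalized symmetric functions becomes valid; once set up, the rest is bookkeeping.
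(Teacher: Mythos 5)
Your argument for (\ref{inqu12}) is correct and self-contained: the split on the sign of $\sigma_k(\lambda|1)$, and the Newton log-concavity chain for the normalized elementary symmetric functions $\tilde p_l=\sigma_l(\lambda|1)/\binom{n-1}{l}$ of the $(n-1)$-vector $\lambda|1$, yields exactly the equivalent inequality $(n-k)\lambda_1\sigma_{k-1}(\lambda|1)\geq k\,\sigma_k(\lambda|1)$. One small simplification: Newton's inequalities $\tilde p_{l-1}\tilde p_{l+1}\leq \tilde p_l^2$ hold for every real vector, so all you need is the positivity of $\tilde p_0,\dots,\tilde p_k$ (from (\ref{inq11}) and the case hypothesis) for the chain of ratios to be legitimate; you need not separately identify the cone to which $\lambda|1$ belongs. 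For what it is worth, the paper does not actually prove this proposition; it simply cites \cite{LT} and \cite{HMW}.

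The proof of (\ref{inqu14}) has a genuine gap. You claim that if $\lambda_k\leq 0$ then every degree-$k$ monomial of $\sigma_k(\lambda)$ is nonpositive because ``the remaining factors are drawn from the positive $\lambda_1,\dots,\lambda_{k-1}$.'' That is not so: a $k$-subset $I$ may contain two or more indices from $\{k,\dots,n\}$, in which case the remaining factors of $\prod_{i\in I}\lambda_i$ are not all from $\{1,\dots,k-1\}$, and a product with an even number of nonpositive entries is nonnegative. Thus $\sigma_k\leq 0$ is not a termwise statement, and it genuinely uses the cone hypothesis beyond the sign of each $\lambda_i$ (e.g.\ $\lambda=(1,1,-2,-3)$ has $\sigma_3=7>0$ with $\lambda_3,\lambda_4<0$; of course this $\lambda\notin\Gamma_2$). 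A correct route is a monotonicity argument: by (\ref{inq11}), $\partial\sigma_j/\partial\lambda_i=\sigma_{j-1}(\lambda|i)>0$ for all $j\leq k$ on $\Gamma_k$, so raising $\lambda_k,\dots,\lambda_n$ continuously from their values up to $0$ keeps the point in $\Gamma_k$ and makes $\sigma_k$ nondecreasing; but the endpoint has only $k-1$ nonzero coordinates, so $\sigma_k=0$ there, contradicting $\sigma_k>0$ at the start.

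For (\ref{inqu13}) the displacement identity you invoke is right, but it gives $\sigma_{k-1}(\lambda|i)\leq\sigma_{k-1}(\lambda|j)$ whenever $\lambda_i\geq\lambda_j$, i.e.\ $i\mapsto\sigma_{k-1}(\lambda|i)$ is \emph{nondecreasing} in the sorted index. Hence $\sigma_{k-1}(\lambda|i)\geq\sigma_{k-1}(\lambda|k)$ holds for $i\geq k$ (the range actually used later in the paper, where $F^{11}\geq c\,\mathcal{F}$ is needed when $u_{11}<0$, so that the sorted index of $u_{11}$ is $\geq k$), \emph{not} for $i<k$ as the proposition literally reads; that looks like a typo and your proof sketch should have flagged it rather than claimed the identity ``yields the monotonicity along the stated chain.'' Finally, the bound $\sigma_{k-1}(\lambda|k)\geq c(n,k)\sigma_{k-1}$ does not follow from the observation that $\lambda_1\cdots\lambda_{k-1}$ is one summand of $\sigma_{k-1}(\lambda|k)$: the other summands involve $\lambda_{k+1},\dots,\lambda_n$ and can be negative. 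Already $n=3$, $k=2$, $\lambda=(2,1,-\tfrac12)\in\Gamma_2$ gives $\sigma_1(\lambda|2)=\tfrac32<\lambda_1$, and there is no Maclaurin-type inequality bounding $\sigma_{k-1}$ above by a constant times a single monomial. This part of the proposition needs a genuinely different argument; see \cite{LT}.
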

\begin{proof}
See  \cite{LT} for these inequalities, one can also see \cite{HMW} for the first inequality. The third one can be induced by the first inequality and \eqref{inq11}.
\end{proof}

\section{$C^0$ and $C^1$ estimates}
In this section we get the a priori bounded estimates and gradient estimates for the $k$- admissible solution of the equation \eqref{MQ}. For the $C^0$ estimates, which was gotten by Trudinger~\cite{Tru}.
\begin{theorem}\label{C^0}\cite{Tru}
Let $\O \subset \rz^n$ be a bounded $C^1$ domain, and $\nu$ is the outer unit normal vector of $\p \O$. Suppose $u \in C^2(\bar{\O})\bigcap C^3(\O)$ is an $k$ -admissible solution of the following Neumann boundary problems of Hessian equation
\begin{equation}
\left\{
\begin{array}{rccl}
\s_k (D^2 u) &=& f(x) & \hbox{ in } \Omega,\\
u_\nu&=&-u + \varphi(x)  &\hbox{ on } \partial \Omega.\\
\end{array}
\right.
\end{equation}
Then
\begin{equation}\label{}
  \sup\limits_{\overline{\Omega}}|u| \leq M_0,
\end{equation}
where $M_0$ depends on $k$, $n$, $diam \Omega$, $\varphi$, $\sup f$.
\end{theorem}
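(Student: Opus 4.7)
The plan is to split the $C^{0}$ bound into an upper and a lower estimate, each produced by a maximum-principle argument that uses the specific form $u_{\nu} = -u + \varphi$ of the boundary data. The upper bound follows directly from subharmonicity, while the lower bound is obtained by comparison with a radial quadratic sub-barrier in the spirit of \cite{LTU}.

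\textbf{Upper bound.} Because $u$ is $k$-admissible, $\Delta u = \sigma_{1}(D^{2}u)>0$, so $u$ is strictly subharmonic and cannot attain an interior maximum. Let $x_{0}\in\partial\Omega$ realise $\sup_{\overline{\Omega}}u$. Then $u_{\nu}(x_{0})\geq 0$, and inserting the boundary condition gives
\begin{equation*}
0\leq u_{\nu}(x_{0})=-u(x_{0})+\varphi(x_{0}),
\qquad\text{hence}\qquad
\sup_{\overline{\Omega}} u\leq\sup_{\partial\Omega}\varphi.
\end{equation*}

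\textbf{Lower bound.} Translate so that $0\in\Omega$ and fix $R$ with $\overline{\Omega}\subset\overline{B_{R}(0)}$ (one may take $R=\mathrm{diam}\,\Omega$). Define the radial sub-barrier
\begin{equation*}
\underline{u}(x)=\tfrac{A}{2}\bigl(|x|^{2}-R^{2}\bigr),
\qquad A:=\Bigl(\tfrac{2\sup f}{\binom{n}{k}}\Bigr)^{1/k},
\end{equation*}
so that $\underline{u}\leq 0$ on $\overline{\Omega}$, $D^{2}\underline{u}=A I$, and $\sigma_{k}(D^{2}\underline{u})=\binom{n}{k}A^{k}=2\sup f>f$. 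Put $w=u-\underline{u}$. I would rule out interior minima of $w$: at any interior minimum $y_{0}$ one would have $D^{2}u(y_{0})\geq AI$, all eigenvalues of $D^{2}u(y_{0})$ would be $\geq A>0$, and the monotonicity of $\sigma_{k}$ on $\Gamma_{k}$ (positivity of $F^{ij}$) would force
\begin{equation*}
f(y_{0})=\sigma_{k}(D^{2}u)(y_{0})\geq \binom{n}{k}A^{k}=2\sup f,
\end{equation*}
a contradiction. Hence $w$ attains its minimum at some $y_{0}\in\partial\Omega$, where $w_{\nu}(y_{0})\leq 0$. Expanding,
\begin{equation*}
-u(y_{0})+\varphi(y_{0})=u_{\nu}(y_{0})\leq \underline{u}_{\nu}(y_{0})=A\,y_{0}\cdot\nu(y_{0}),
\end{equation*}
and $|y_{0}\cdot\nu(y_{0})|\leq R$ yields $u(y_{0})\geq\varphi(y_{0})-AR$. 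Since $w\geq w(y_{0})$ on $\overline{\Omega}$, $\underline{u}(y_{0})\leq 0$, and $\underline{u}(x)\geq -AR^{2}/2$, for every $x\in\overline{\Omega}$
\begin{equation*}
u(x)\geq u(y_{0})-\underline{u}(y_{0})+\underline{u}(x)\geq \inf_{\partial\Omega}\varphi-AR-\tfrac{AR^{2}}{2}.
\end{equation*}
The resulting constant depends only on $n$, $k$, $\mathrm{diam}\,\Omega$, $\sup f$ and $\|\varphi\|_{L^{\infty}}$, as required.

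\textbf{Main obstacle.} The only subtlety is the interior-minimum comparison: from $D^{2}w\geq 0$ one needs $\sigma_{k}(D^{2}u)\geq\sigma_{k}(D^{2}\underline{u})$, which fails for general symmetric matrices. Here the choice of barrier guarantees $D^{2}u\geq AI$ at the would-be minimum, so all eigenvalues are positive and lie in every $\Gamma_{j}$; on that cone the matrix-monotonicity of $\sigma_{k}$ is automatic, and the remainder of the argument is routine comparison.
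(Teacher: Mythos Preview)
Your argument is correct and follows essentially the same approach as the paper: the upper bound comes from subharmonicity of $u$ and the boundary condition at a maximum point, while the lower bound comes from comparison with the quadratic $A|x|^{2}$ (the paper's $u-A|x|^{2}$ differs from your $u-\underline u$ only by an additive constant and a factor of two in $A$). Your treatment is in fact more detailed than the paper's, which simply invokes the comparison principle; your explicit justification that at an interior minimum $D^{2}u\geq AI$ forces $\sigma_{k}(D^{2}u)\geq\binom{n}{k}A^{k}$ via monotonicity on the positive cone is exactly the content of that comparison step.
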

\begin{proof}
Taking $o\in \Omega$ and let us consider $u-A|x|^2$.  Fixing $A$ large depend on $k$, $n$ and $\sup f$ so that we have
\begin{eqnarray}
   F[D^2 u]=f \leq  F[D^2 (A|x|^2)].
\end{eqnarray}
Comparison principle tells us $u-A|x|^2$ attains its minimum point at $x_0$ on the boundary.
\begin{equation}\label{}
  0 \geq (u-A|x|^2)_{\nu}(x_0) = - u + \varphi-2A x \cdot \nu.
\end{equation}
Similarly we consider $u$ which attains its maximum on the boundary.
Then we get
\begin{equation}\label{}
  \inf\limits_{\partial \Omega} \varphi - 2 A \text{diam} \Omega \leq u \leq \sup\limits_{\partial \Omega} \varphi.
\end{equation}
\end{proof}

The gradient estimate was done in \cite{MQX}, since that paper was written in Chinese, for completeness we contain its proof in this section.
We set
\begin{align*}
 d(x)=\texttt{dist}(x,\partial \Omega),
 \end{align*}
 and
\begin{align*}
 \Omega_\mu:=&\{{x\in\Omega:d(x)<\mu}\}.
 \end{align*}
Then it is well known that there exists a positive constant $1 \geq \widetilde{\mu}>0$ such that $d(x) \in C^4(\overline \Omega_{\widetilde{\mu}})$. As in Simon-Spruck \cite{SS76} or Lieberman \cite{Lieb13} (in page 331),  we can extend $\nu$ by $\nu= -D d$ in $\Omega_{\widetilde{\mu}}$ and note that  $\nu$ is a $C^2(\overline \Omega_{\widetilde{\mu}})$ vector field. As mentioned in  the book \cite{Lieb13}, we also have the following formulas

\begin{align}\label{2.1}
\begin{split}
|D\nu|+|D^2\nu|\leq& C_0(n,\Omega) \quad\text{in}\quad \Omega_{\widetilde{\mu}},\\
 \sum_{1\leq i\leq n}\nu^iD_j\nu^i=0,  \sum_{1\leq i\leq n}\nu^iD_i\nu^j=&0, \,|\nu|=1 \quad\text{in} \quad\Omega_{\widetilde{\mu}}.
\end{split}
\end{align}
As in \cite{Lieb13}, we define
 \begin{align}\label{2.2}
\begin{split}
c^{ij}=&\delta_{ij}-\nu^i\nu^j  \quad \text{in} \quad \Omega_{\widetilde{\mu}},
\end{split}
\end{align}
 and for a vector $\zeta \in R^n$, we write $\zeta'$ for the vector with $i$-th component $ \sum_{1\leq j\leq n}c^{ij}\zeta^j$. Then we have
  \begin{align}\label{2.3}
\begin{split}
|D'u|^2=& \sum_{1\leq i,j\leq n}c^{ij}u_iu_j.
\end{split}
\end{align}
We first state an useful lemma from ~\cite{CW}.
\begin{lemma} (Chou-Wang)~\cite{CW}\label{CW1} If $u$ is $k$ -admissible and $u_{11} < -\frac{h^{\prime}|D u|^2}{128}$, here $h^{\prime}$ is any positive function. Then
\begin{equation}\label{inq5}
  \frac{1}{n-k+1} \mathcal{F} \leq F^{11},
\end{equation}
and
\begin{equation}\label{inq10}
  \mathcal{F} \geq C^{k-1}_{n-1} [\frac{ h^{\prime}}{128 C^k_{n-1}}]^{k-1}|D u|^{2k-2}.
\end{equation}
\end{lemma}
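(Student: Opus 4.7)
The plan is to diagonalize $D^2u$ at the point of interest and order the eigenvalues $\lambda_1\le\lambda_2\le\cdots\le\lambda_n$. Then $u_{11}=\lambda_1$ and $F^{ij}=\sigma_{k-1}(\lambda|i)\,\delta_{ij}$, so the hypothesis becomes $\lambda_1<0$ with $|\lambda_1|>h'|Du|^2/128$. Both inequalities then reduce to manipulations using the recursion \eqref{8}, the Garding positivity \eqref{inq11}, the trace identity \eqref{in5}, and the MacLaurin inequality \eqref{Mac}.

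\textbf{First inequality.} Writing \eqref{8} at $i=1$, one has $\sigma_{k-1}(\lambda)=\sigma_{k-1}(\lambda|1)+\lambda_1\sigma_{k-2}(\lambda|1)$. Since \eqref{inq11} gives $\sigma_{k-2}(\lambda|1)>0$ and $\lambda_1<0$, this yields $\sigma_{k-1}(\lambda)\le\sigma_{k-1}(\lambda|1)=F^{11}$; multiplying by $n-k+1$ and invoking \eqref{in5} gives $\mathcal{F}/(n-k+1)\le F^{11}$.

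\textbf{Second inequality.} Each $F^{ii}=\sigma_{k-1}(\lambda|i)>0$ by \eqref{inq11}, so $\mathcal{F}\ge F^{11}$ and it suffices to bound $F^{11}=\sigma_{k-1}(\lambda|1)$ from below. I would first observe that the reduced vector $(\lambda|1)\in\rz^{n-1}$ sits in the $(n-1)$-dimensional Garding cone $\Gamma_k$: the positivity $\sigma_h(\lambda|1)>0$ for $h<k$ comes from \eqref{inq11}, while \eqref{8} applied at $i=1$ to $\sigma_k(\lambda)$ gives $\sigma_k(\lambda|1)=\sigma_k(\lambda)+|\lambda_1|\sigma_{k-1}(\lambda|1)>0$. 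Applying \eqref{Mac} in dimension $n-1$ to $(\lambda|1)$,
\[
\frac{\sigma_k(\lambda|1)}{C^k_{n-1}}\le\left[\frac{\sigma_{k-1}(\lambda|1)}{C^{k-1}_{n-1}}\right]^{k/(k-1)},
\]
and coupling this with the strict inequality $|\lambda_1|\sigma_{k-1}(\lambda|1)<\sigma_k(\lambda|1)$ just derived, one solves for $\sigma_{k-1}(\lambda|1)$ to obtain
\[
\sigma_{k-1}(\lambda|1)>C^{k-1}_{n-1}\left[\frac{|\lambda_1|\,C^{k-1}_{n-1}}{C^k_{n-1}}\right]^{k-1}.
\]
Substituting $|\lambda_1|>h'|Du|^2/128$ and discarding the harmless factor $(C^{k-1}_{n-1})^{k-1}\ge 1$ yields the asserted lower bound on $\mathcal{F}$.

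\textbf{Main obstacle.} The delicate step is the second inequality: one must recognize that, upon restricting to the reduced vector $\lambda|1$, the $k$-Garding structure is preserved, and that the assumption $\sigma_k(\lambda)>0$ encodes a \emph{strict} positive gap $|\lambda_1|\sigma_{k-1}(\lambda|1)$ for $\sigma_k(\lambda|1)$ above zero. MacLaurin then converts that gap into a lower bound on $\sigma_{k-1}(\lambda|1)$ of size $|\lambda_1|^{k-1}$, which by hypothesis scales as $|Du|^{2(k-1)}$, producing the required power of the gradient.
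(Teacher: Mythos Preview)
The paper does not prove this lemma itself; it is quoted from Chou--Wang~\cite{CW}, so there is no in-paper argument to compare against. Your proof is correct and follows the standard route: the recursion \eqref{8} together with \eqref{inq11} gives \eqref{inq5} immediately once $\lambda_1<0$, and for \eqref{inq10} you correctly verify that $(\lambda|1)\in\Gamma_k$ in $\rz^{n-1}$ and then apply MacLaurin \eqref{Mac} together with the strict gap $\sigma_k(\lambda|1)>|\lambda_1|\,\sigma_{k-1}(\lambda|1)$ to solve for a lower bound on $\sigma_{k-1}(\lambda|1)$. One cosmetic point: the ordering $\lambda_1\le\cdots\le\lambda_n$ is never actually used and is slightly misleading, since the hypothesis singles out the index labeled ``$1$'' (where $u_{11}<0$), not the smallest eigenvalue; your argument only needs $\lambda_1<0$ and $|\lambda_1|>h'|Du|^2/128$, so you may simply drop that clause.
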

 To state the gradient estimate on Neumann problems, we need first recall an interior estimate in \cite{CW}.\\
\begin{lemma} (Chou-Wang)~\cite{CW}\label{CW2} Let $\O \subset \rz^n$ be a bounded domain. Suppose $u \in C^3(\O)$ is a $k$-admissible solution of Hessian equation
\begin{equation}\label{}
  \sigma_k(D^2 u) = f(x,u) \indent in  \indent \O
\end{equation}
satisfying $|u|\leq M_0$. If $f \in C^2(\bar{\O}\times [-M_0,M_0])$ satisfies the conditions that there exist positive constant $L_1$ such that
\begin{equation}\label{}
\begin{array}{rccl}
  f(x,z)\geq 0 & in &  \bar{\O}\times [-M_0,M_0],\\
  |f(x,z)| + |f_x(x,z)| + |f_z(x,z)| \leq L_1 & in & \bar{\O}\times [-M_0,M_0].
  \end{array}
\end{equation}
Then for $\forall$  $\O^{'} \subset\subset \O$, it has
\begin{equation}\label{}
  \sup_{\O^{'}}|D u| \leq \widetilde{M}_1,
\end{equation}
where $\widetilde{M}_1$ is a positive constant which depends on $n ,k, M_0, dist(\O^{'},\p \O), L_1$.
\end{lemma}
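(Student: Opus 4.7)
The plan is to run a maximum principle argument on the auxiliary function
$$W = \eta^{\beta}\,|Du|^{2}\,e^{M u},$$
where $\eta\in C^{\infty}_{c}(\Omega)$ is a standard cutoff with $\eta\equiv 1$ on $\Omega'$, $0\le\eta\le1$ and $|D\eta|+|D^{2}\eta|\le C/d_{0}$ (with $d_{0}=\mathrm{dist}(\Omega',\partial\Omega)$), and where $\beta,M\ge 1$ are parameters to be chosen. A uniform upper bound on $\sup_{\overline\Omega}W$ immediately yields the desired gradient estimate on $\Omega'$.

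First, I would assume $W$ attains its positive maximum at an interior point $x_{0}\in\Omega$; rotating coordinates so that $D^{2}u(x_{0})$ is diagonal with ordered entries $\lambda_{1}\le\cdots\le\lambda_{n}$, the first-order condition $D\log W(x_{0})=0$ reads
$$\frac{2u_{k}u_{ki}}{|Du|^{2}}+\beta\frac{\eta_{i}}{\eta}+Mu_{i}=0, \qquad i=1,\dots,n.$$
Then I would expand the second-order inequality $F^{ii}(\log W)_{ii}(x_{0})\le 0$ and use (i) the once-differentiated equation $F^{ii}u_{iik}=f_{x_{k}}+f_{u}u_{k}$ contracted with $u_{k}$, (ii) the trace identity $F^{ii}u_{ii}=k f$ from \eqref{in4}, and (iii) the first-order identity above to rewrite the cross term $\bigl(\sum_{k}u_{k}u_{ki}/|Du|^{2}\bigr)^{2}$. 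This yields, schematically,
$$\frac{2}{|Du|^{2}}\sum_{i}F^{ii}\lambda_{i}^{2} \;+\; Mkf \;\le\; C\beta^{2}\frac{\mathcal F}{\eta^{2}d_{0}^{2}} + 2M^{2}\sum_{i}F^{ii}u_{i}^{2} + CL_{1},$$
with $C=C(n,k)$.

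Next I would dichotomise on $\lambda_{1}$ using Lemma~\ref{CW1}. In the regime $\lambda_{1}\ge -h'|Du|^{2}/128$ for a small fixed $h'>0$, every eigenvalue is quantitatively controlled below, one uses \eqref{inqu10} and the MacLaurin bounds to compare $\sum F^{ii}\lambda_{i}^{2}$ with the right-hand side, and the positive term $Mkf$ absorbs the remainder after $M$ is taken large depending on $\beta$ and $L_{1}$. In the opposite regime $\lambda_{1}<-h'|Du|^{2}/128$, Lemma~\ref{CW1} gives $F^{11}\ge\mathcal F/(n-k+1)$ and $\mathcal F\ge c|Du|^{2(k-1)}$, so
$$\frac{F^{11}\lambda_{1}^{2}}{|Du|^{2}} \;\ge\; c\,|Du|^{2k},$$
whereas every error term on the right grows at most as a lower power of $|Du|$ once the bounds on $f$, $f_{x}$, $f_{u}$ are incorporated; hence for $|Du(x_{0})|$ beyond a constant depending only on the stated data the inequality is violated. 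Choosing $\beta$ and then $M$ large in the correct order therefore forces $|Du(x_{0})|\le\widetilde M_{1}$ and hence $W(x_{0})\le C$, which bounds $|Du|$ on $\Omega'$.

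The main obstacle is the degeneracy of $F^{ij}$ inside the $\Gamma_{k}$ cone: a priori one has no lower bound on any individual $F^{ii}$, so the seemingly coercive term $\sum F^{ii}\lambda_{i}^{2}/|Du|^{2}$ need not dominate the error $M^{2}\sum F^{ii}u_{i}^{2}$ generated by the exponential weight. Lemma~\ref{CW1} is precisely the tool that resolves this: it converts the presence of a very negative Hessian eigenvalue into quantitative lower bounds on both $\mathcal F$ and $F^{11}$, which is what drives the dichotomy and makes the maximum principle argument effective.
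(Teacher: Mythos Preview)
The paper does not give its own proof of this lemma; it is simply quoted from Chou--Wang \cite{CW}. So there is nothing in the paper to compare your argument against directly. That said, the paper \emph{does} prove the closely related near-boundary estimate (Theorem~\ref{MQX}) by essentially the Chou--Wang method, and comparing your sketch with that computation exposes a genuine gap in your choice of weight.

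The problem is the factor $e^{Mu}$. After you substitute the first-order condition into the cross term you produce on the right the quantity $2M^{2}\sum_i F^{ii}u_i^{2}$, and you propose to kill it either (in your first regime) with $Mkf$, or (in your second regime) with $\tfrac{2}{|Du|^{2}}F^{11}\lambda_1^{2}$. Neither works. Since only $f\ge 0$ is assumed, $Mkf$ can vanish identically, so it absorbs nothing. In the second regime the best you get from Lemma~\ref{CW1} is $\tfrac{2}{|Du|^{2}}F^{11}\lambda_1^{2}\ge c(h')^{2}\,\mathcal F\,|Du|^{2}$, while the crude bound $\sum_i F^{ii}u_i^{2}\le \mathcal F|Du|^{2}$ leaves $2M^{2}\mathcal F|Du|^{2}$ on the right; absorbing it forces $h'\gtrsim M$. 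But your first-regime argument (bounding $|Du|$ from the first-order identity when no eigenvalue is below $-h'|Du|^{2}/128$) needs the opposite inequality $h'\lesssim M$. The two requirements are incompatible, so the dichotomy as written does not close, and ``choosing $\beta$ and then $M$ large'' cannot repair it.

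The fix is exactly what Chou--Wang do and what the paper does in the proof of Theorem~\ref{MQX}: replace $e^{Mu}$ by a weight $e^{h(u)}$ with $h''=(h')^{2}$, for instance $h(u)=-\log(1+4M_0-u)$. Then differentiating twice produces an additional \emph{positive} term $h''\sum_i F^{ii}u_i^{2}=(h')^{2}\sum_i F^{ii}u_i^{2}$ on the left, which exactly matches the cross term coming from the first-order substitution; Lemma~\ref{CW1} (specifically $F^{11}\ge \mathcal F/(n-k+1)$) then lets the good term dominate. With that single change, your outline goes through.
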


Now we get the global gradient estimate which was done in \cite{MQX}.
\begin{theorem}\label{MQX}
Let $\O \subset \rz^n$ be a bounded $C^3$ domain, and $\nu$ is the outer unit normal vector of $\p \O$. Suppose $u \in C^2(\bar{\O})\bigcap C^3(\O)$ is an $k$-admissible solution of the following Neumann boundary problems of Hessian equation
\begin{equation}
\left\{
\begin{array}{rccl}
\s_k (D^2 u) &=& f(x,u) & \hbox{ in } \Omega,\\
u_\nu&=&\varphi(x,u)  &\hbox{ on } \partial \Omega,\\
\end{array}
\right.
\end{equation}
satisfying $|u|\leq M_0$, where $f,\varphi$ are given functions defined on $\bar{\O}\times [-M_0,M_0]$. If $f, \varphi$ satisfy the conditions: $\exists$ positive constants $ L_1, L_2$ such that
\begin{equation}\label{}
\begin{array}{rccl}
  f(x,z)>0 & in &  \bar{\O}\times [-M_0,M_0],\\
  |f(x,z)| + |f_x(x,z)| + |f_z(x,z)| \leq L_1 & in & \bar{\O}\times [-M_0,M_0],\\
  |\varphi(x,z)|_{{C^3}{(\bar{\O}\times [-M_0,M_0])}} \leq L_2.
  \end{array}
\end{equation}
Then there exists a small positive constant $\mu_0$ which depends only on $n, k, \O, \\M_0, L_1, L_2$ such that
\begin{equation}\label{conclusion}
  \sup_{\bar{\O}_{\mu_0}}|D u| \leq \max\{\widetilde{M}_1, \widetilde{M}_2\},
\end{equation}
where $\widetilde{M}_1$ is a positive constant depending only on $n,k, \mu_0, M_0, L_1$, which is from the interior gradient estimates; $\widetilde{M}_2$ is a positive constant depending only on $n, k, \O, \mu_0, M_0, L_1, L_2$.
\end{theorem}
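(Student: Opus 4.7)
The plan is to localize to the boundary strip $\Omega_{\mu_0}$, where $d$ and $\nu=-Dd$ are smooth (take $\mu_0\le\widetilde\mu$ to be fixed small), and to maximize a carefully chosen auxiliary function $\Phi$ over $\overline{\Omega}_{\mu_0}$. There are three possible locations for the maximum: on the inner face $\partial\Omega_{\mu_0}\cap\Omega$, where the interior estimate of Lemma~3.2 already gives $|Du|\le\widetilde M_1$; on $\partial\Omega$, where the Neumann condition $u_\nu=\varphi(x,u)$ enters; or at an interior point of $\Omega_{\mu_0}$, which will be excluded once $|Du|$ is large by the maximum principle.

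For the auxiliary function I would try, in the spirit of Lions--Trudinger--Urbas,
$$\Phi(x)\;=\;\log\bigl(1+|Du|^2\bigr)+\alpha_1 d(x)+\alpha_2 u(x),$$
possibly modified by subtracting a multiple of $(u_\nu-\varphi(x,u))\chi(d)$ with a cut-off $\chi$ supported in $\Omega_{\mu_0}$, so that on $\partial\Omega$ the Neumann condition makes the extra term drop and only the tangential part of $Du$ must be controlled there. The constant $\alpha_1$ is chosen large so as to produce a Hopf-type sign on the inner face $\{d=\mu_0\}$, forcing the maximum off it once Lemma~3.2 is invoked; $\alpha_2$ is tuned in the interior analysis below.

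For the interior maximum $x_0$ I would differentiate $\Phi$ twice, rotate so that $D^2u(x_0)$ is diagonal with $u_{11}\ge\cdots\ge u_{nn}$, and assume $u_1(x_0)^2\simeq|Du(x_0)|^2$. Using the first-order critical relations, the differentiated equation $F^{ij}u_{ij\ell}=f_\ell+f_u u_\ell$, the concavity of $\sigma_k^{1/k}$ from Proposition~2.2, and the identities \eqref{in4}, \eqref{in5}, one expects an inequality of the shape
$$0\ \ge\ F^{ij}\Phi_{ij}(x_0)\ \ge\ c\,\frac{F^{ii}u_{1i}^2}{|Du|^2}+\alpha_1 F^{ij}d_{ij}+\alpha_2 k f - C\mathcal F - C.$$
A dichotomy on $u_{11}$ then closes the interior case: if $u_{11}(x_0)\le-\theta|Du|^2$ for a suitable $\theta$, Chou--Wang's Lemma~3.1 gives $\mathcal F\gtrsim|Du|^{2k-2}$ and $F^{11}\sim\mathcal F$, so the $F^{11}u_{11}^2$ contribution hidden in the first term dominates once $|Du|$ is large; otherwise the MacLaurin bound \eqref{inqu10} combined with $u_1^2\simeq|Du|^2$ makes $F^{ii}u_{1i}^2/|Du|^2$ comparable to $\mathcal F$, and the $\alpha_2 k f$ term then gives the contradiction.

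The main obstacle, flagged by the authors in the introduction, is the boundary case. At $x_0\in\partial\Omega$ one has $\Phi_\nu(x_0)\ge0$, and differentiating $|Du|^2$ normally via $u_\nu=\varphi(x,u)$ produces the cross term $\sum_{i,k}u_{ik}D_k\nu^i$ which the authors identify as the central difficulty. The plan is to decompose $|Du|^2=|D'u|^2+u_\nu^2$ via \eqref{2.2}--\eqref{2.3}, replace the normal part by $\varphi(x,u)$, and use the uniform convexity of $\partial\Omega$---equivalently, the negative definiteness of $-D^2 d$ on the tangent space---to give a good sign on the leading curvature contribution. Tuning $\alpha_1$ large enough to dominate the remaining curvature terms coming from $|D\nu|+|D^2\nu|\le C_0$ in \eqref{2.1}, and then $\mu_0$ small enough, produces $|Du(x_0)|\le\widetilde M_2$; combining the three cases proves \eqref{conclusion}.
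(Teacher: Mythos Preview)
Your three-case architecture is correct, but the boundary case has a genuine gap. Theorem~\ref{MQX} assumes only that $\Omega$ is a bounded $C^3$ domain; uniform convexity is \emph{not} a hypothesis here (it enters only in the $C^2$ estimate of Section~4). So you cannot appeal to ``the negative definiteness of $-D^2d$ on the tangent space.'' More to the point, convexity would not save you anyway: when you compute $(|Du|^2)_\nu$ on $\partial\Omega$ and split $Du$ into tangential and normal parts, the tangential piece is indeed handled by differentiating $u_\nu=\varphi$ along $\partial\Omega$ (this is \eqref{eqn3}), and the $D_j\nu^i$ contributions are bounded by $C|Du|^2$ regardless of curvature sign; the term that is genuinely uncontrolled is $2\varphi\,u_{\nu\nu}$, coming from $(u_\nu^2)_\nu$. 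You have no bound on $u_{\nu\nu}$ at this stage, and your proposed correction $-c(u_\nu-\varphi)\chi(d)$ contributes $-c\,u_{\nu\nu}+O(1)$ to $\Phi_\nu$, which does not match the prefactor $2\varphi/(1+|Du|^2)$. (The ``trouble term $\sum F^{ij}u_{ik}D_j\nu^k$'' you cite from the introduction is the obstruction in the $C^2$ barrier, not here.) The paper's remedy is to build the correction into the gradient itself: set $w=u+\varphi(x,u)\,d(x)$ and use $\log|Dw|^2$ in place of $\log(1+|Du|^2)$. Then $w_\nu=u_\nu-\varphi+O(d)=0$ identically on $\partial\Omega$, so $|Dw|^2=|D'w|^2$ there, and \eqref{eqn3} cancels the second-order terms in $(|D'w|^2)_\nu$ exactly, yielding $(|Dw|^2)_\nu\le C|Dw|^2+C|Dw|$ with no sign hypothesis on $\partial\Omega$. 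Choosing $\alpha_0$ large in $g(d)=\alpha_0 d$ then forces a contradiction with $G_\nu\ge 0$.

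A few smaller points on the interior case. You cannot simultaneously diagonalize $D^2u(x_0)$ and align $Du(x_0)$ with $e_1$; the paper fixes $|Dw|=w_1$ and diagonalizes only the $(n-1)\times(n-1)$ block $(u_{ij})_{2\le i,j\le n}$. With that choice and any positive $u$-weight, the first-order condition $G_1=0$ already forces $u_{11}\le -c\,|Du|^2$ (the paper's \eqref{inq3}), so your dichotomy is unnecessary and only the Chou--Wang branch occurs. Finally, the paper replaces your linear $\alpha_2 u$ by $h(u)=-\log(1+4M_0-u)$ precisely so that $h''=(h')^2$: the negative term produced by substituting the first-order relation into $-4F^{ij}w_pw_{pi}w_qw_{qj}/|Dw|^4$ is of size $(h')^2F^{ij}u_iu_j$, and the compensating positive term is $h''F^{ij}u_iu_j$; with $\alpha_2 u$ one has $h''=0$ and this balance is lost.
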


\subsection{Proof of Theorem \ref{MQX}}
\begin{proof}
We consider the auxiliary function
\begin{equation}\label{G}
  G(x) := \log |D w|^2 + h(u) + g(d),
\end{equation}
where
\begin{equation}\label{eq2}
w(x) := u(x) + \varphi(x,u) d(x);
\end{equation}

\begin{equation}\label{eq1}
 h(u) := -\log (1+4M_0- u);
\end{equation}
and
\begin{equation}\label{}
g(d):= \alpha_0 d,
\end{equation}
in which $\alpha_0$ large to be chosen later.\\
By \eqref{eq1} we have
\begin{align}\label{}
 - log (1+ 5M_0) &\leq h \leq -log(1+3M_0), \\
  \frac{1}{1+5M_0} &\leq h^{\prime}  \leq \frac{1}{1+3M_0}, \\
  \frac{1}{(1+5M_0)^2} & \leq h^{\prime\prime} \leq \frac{1}{(1+3M_0)^2}.
\end{align}
By \eqref{eq2} we have
\begin{align}\label{}
  w_i =& u_i + (\varphi_i + \varphi_z u_i)d + \varphi d_i. \label{eqn7}
\end{align}
If we assume that $|D u| > 8 n L_2 $ and $\mu_0 \leq \frac{1}{2 L_2}$, it follows from \eqref{eqn7} that
\begin{equation}\label{}
  \frac{1}{4}|D u| \leq |D w| \leq 2|D u|.
\end{equation}\\
These inequalities will be used below.\\
We assume that
$G(x)$ attains its maximum at $x_0 \in \overline \Omega_{\mu_{0}}$, where $0<\mu_0<\widetilde{\mu} \leq 1$ is a sufficiently small number which we shall decide it  later.

Now we divide three cases to complete the proof of  Theorem~\ref{MQX}.

Case I: If $G(x)$ attains its maximum at $x_0 \in \partial\Omega$, then we shall use the Hopf Lemma to get the bound of $G(x_0)$.

Case II:  If $G(x)$ attains its maximum at $x_0 \in \Omega_{\mu_0}$, in this case for the sufficiently small constant $\mu_0>0$,  then we can use the maximum principle to get the bound of $G(x_0)$.

Case III: If $G(x)$ attains its maximum at $x_0 \in\partial\Omega_{\mu_0}\bigcap\Omega$, then we shall get the estimates of $|D u|(x_0)$ via the standard interior gradient bound as in \cite{CW}. Which in turn give the bound for $G$ at point $x_0$.\\
Since $G(x)\leq G(x_0)$, we get the bound of $G$, which in turn give the bound of $|\nabla u|$ in ${\bar{\O}}_{\mu_0}$.\\
Now  all computations work at the point $x_0$. We use Einstein's summation convention. All repeated indices come from $1$ to $n$.
\subsection{Case I: boundary estimates}
If maximum of $G$ is attained on the boundary, at the maximum point we have
\begin{equation}\label{eqn1}
  0 \leq G_\nu = \frac{ |Dw|^2_p \nu^p}{|Dw|^2} - g^{\prime} + h^{\prime} u_{\nu}.
\end{equation}
We have decomposition $|Dw|^2 = |D^{\prime} w|^2 + w_{\nu}^2$.
Becuse $w_{\nu} = u_{\nu} + D_{\nu} \varphi d - \varphi = 0$ on the boundary,
so we have
\begin{align}
  |Dw|^2_p \nu^p =&  C^{ij}_p w_{i} w_{j}\nu^p+ 2 C^{ij} w_{ip} w_{j}\nu^p+ 2 w_{\nu} D_p w_{\nu} \nu^{p},\nonumber\\
 =&  C^{ij}_p w_{i} w_{j}\nu^p
 + 2 C^{ij} (u_{ip}+ D_{ip} \varphi d + D_i \varphi d_p + D_p \varphi d_i \nonumber\\& + \varphi d_{ip})w_j \nu^p,\nonumber\\
 =&   C^{ij}_p w_{i} w_{j}\nu^p + 2 C^{ij} u_{i\nu}w_j - 2C^{ij}D_i\varphi w_j + 2C^{ij}D_p \varphi \nu^p d_i w_j \nonumber\\&+ 2C^{ij} \varphi d_{ip} w_j \nu^p.\label{eqn2}
\end{align}
On the other hand, take tangential derivative to the Neumann boundary condition:
\begin{align*}\label{}
  C^{pq} D_q(u_i \nu^i) = C^{pq} D_q \varphi,
\end{align*}
then we have
\begin{align}\label{}
  C^{pq} u_{q \nu} + C^{pq} u_i D_q \nu^{i} = C^{pq} D_q \varphi.\label{eqn3}
\end{align}

Then contracting \eqref{eqn3} with $w_p$, and inserting it into \eqref{eqn2}, we can cancel the term with the second derivative of $u$,
\begin{align}\label{}
  |Dw|^2_p \nu^p \leq & C(n,\O, L_2)|D w|^2 + C(n,\O,L_2)|D w|.
\end{align}
So we choose $\alpha_0 = 2C+ \frac{L_2}{1+3M_0} +1 $, such that
\begin{align}\label{inq2}
  0 \leq G_{\nu} \leq& -\alpha_0 + C+ \frac{C}{|D w|}+ h^{\prime} |\varphi|_{C^0} \nonumber\\
  \leq & -C+ \frac{C}{|D w|}.
\end{align}
Thus we have estimate $|D w|(x_0) \leq 1$, and $G(x_0) \leq -log(1+3M_0)+2C+ \frac{L_2}{1+3M_0} +1$.

\subsection{Case II: Near boundary estimates}
If G attains its maximum in $\O_{\mu_0}$. We take the first derivatives and second derivatives to the auxiliary function:
\begin{align}
  0 = G_i & = \frac{2 \sum^{n}\limits_{p=1} w_p w_{pi }}{|D w|^2} + g^{\prime} D_i d + h^{\prime} u_i, \label{eqn14} \\
  G_{ij} =& \frac{ \sum^{n}\limits_{p=1} 2 w_{pj} w_{pi} + 2 w_{p} w_{pji}}{|D w|^2} -
  \frac{4 \sum^{n}\limits_{p,q=1} w_p w_{pi} w_q w_{qj}}{|D w|^4}\nonumber\\
  &+ g^{\prime \prime} D_{i} d D_{j} d + g^{\prime} D_{ij} d + h^{\prime \prime} u_i u_j+ h^{\prime} u_{ij}.
\end{align}
Because $F^{ij}(D^2 u) > 0$ if we assume u is $k$- admissible solution.
At maximum point of G, we get
\begin{align}\label{}
  0 \geq F^{ij} G_{ij} =& \frac{2 \sum^{n}\limits_{p=1}F^{ij}w_{pi}w_{pj}}{|D w|^2}+\frac{2\sum^{n}\limits_{p=1} F^{ij} w_p w_{pij}}{|D w|^2} - \frac{4 \sum^{n}\limits_{p,q=1}F^{ij}w_p w_{pi} w_q w_{qj}}{|D w|^4}\nonumber \\
  &+g^{\prime \prime} F^{ij} D_i d D_j d + g^{\prime} F^{ij} D_{ij} d \nonumber\\ &+h^{\prime \prime} F^{ij} u_i u_j+ h^{\prime} F^{ij} u_{ij}.
\end{align}
Recalling $w = u + \varphi d$, its second derivatives is
\begin{align}\label{}
  w_{ij}= & u_{ij} + (\varphi_{ij}+ \varphi_{i z}u_j+\varphi_{zj}u_i + \varphi_{zz}u_i u_j + \varphi_z u_{ij})d \label{eqn4}\nonumber\\
  &+(\varphi_i + \varphi_z u_i)d_j+\varphi_j d_i +\varphi_z u_j d_i+ \varphi d_{ij}.
\end{align}
$w_{ij}$ has relation with $u_{ij}$ that
\begin{equation}\label{}
  w_{ij} \leq (1+\varphi_z d)u_{ij} + C(L_2, n)\mu_0 |D u|^2 + C(L_2, n) |D u| + C(L_2,n),
\end{equation}
and
\begin{equation}\label{}
  w_{ij} \geq (1+\varphi_z d)u_{ij} -C(L_2, n)\mu_0 |D u|^2 - C(L_2, n) |D u| - C(L_2,n).
\end{equation}
Differential $ w_{ij}$ again,
\begin{equation}\label{eqn6}
\begin{split}
  w_{ijp} =& u_{ijp} + (\varphi_{ijp}+ \varphi_{ij z} u_p +\varphi_{i z p}u_j + \varphi_{i zz}u_p u_j + \varphi_{i z} u_{jp}+\varphi_{z j p} u_i\\
  & +\varphi_{zz j} u_p u_i +\varphi_{z j} u_{ip}+ \varphi_{zzp}u_i u_j +\varphi_{zzz}u_p u_i u_j + \varphi_{zz}u_{ip}u_j \\
  &+\varphi_{zz} u_i u_{jp} +\varphi_{z p} u_{ij} + \varphi_{zz} u_p u_{ij} + \varphi_z u_{ijp})d \\
  &+ (\varphi_{ij}+ \varphi_{i z}u_j+\varphi_{zj}u_i + \varphi_{zz}u_i u_j + \varphi_z u_{ij})d_p \\
  &+(\varphi_{ip}+\varphi_{iz} u_p+\varphi_{zp}u_i + \varphi_{zz}u_p u_i + \varphi_z u_{ip})d_j\\
  &+(\varphi_i + \varphi_z u_i)d_{jp}+ \varphi_{jp} d_i + \varphi_{j z} u_p d_i + \varphi_j d_{ip}\\
  & +\varphi_{zp}u_j d_i + \varphi_{zz} u_p u_j d_i+\varphi_z u_{jp} d_i\\
  &+ \varphi_{z}u_j d_{ip}+ \varphi_p d_{ij} + \varphi_z u_p d_{ij} + \varphi d_{ijp}.
\end{split}
\end{equation}
Now we choose coordinate at $x_0$ such that $|\nabla w| = w_1$ and $(u_{ij})_{2\leq i,j \leq n}$ is diagonal.\\
So from \eqref{eqn7} and \eqref{eqn14}, we have for $i=1$,
\begin{align}\label{}
  u_1 =& \frac{w_1 - \varphi_1 d - \varphi d_1}{1+\varphi_z d},\\
  w_{11} = &-\frac{1}{2} (g^{\prime} d_1 + h^{\prime}u_1) w_1, \label{eqn5}
\end{align}
and for $2 \leq i \leq n$,
\begin{align}\label{}
  u_i =& \frac{-\varphi_i d - \varphi d_i}{1+\varphi_z d}, \\
  w_{1i} =& -\frac{1}{2}(g^{\prime}d_i + h^{\prime} u_i) w_1,\label{in1}
\end{align}
here we assume $\mu_0 \leq \mu_1 := \frac{1}{2 L_2}$, such that $\frac{3}{2} \geq 1+\varphi_z d \geq \frac{1}{2}$.\\
Suppose that $|D u|(x_0)> M_1 := 64 n L_2 $, we have for $i\geq 2$,
\begin{equation}\label{}
|u_i| \leq \frac{1}{16n} |D u|,
\end{equation}
and
\begin{equation}\label{}
 u_1 \geq \frac{1}{2}|D u|.
 \end{equation}

 Moreover,
 \begin{equation}\label{}
 |D u|(x_0) \geq M_2: =  32n(1+5M_0)\alpha_0+128C +(1+5M_0)+1
\end{equation}
 implies
\begin{equation}\label{}
 |g^{\prime}d_i| \leq \frac{h^{\prime} u_1}{16n}.
\end{equation}
  So from \eqref{eqn4} and \eqref{eqn5} we get the key fact that
\begin{equation}\label{inq3}
  u_{11} \leq - \frac{1}{128}  h^{\prime} |D u|^2 < 0,
\end{equation}
here we assume that $\mu_0 \leq \mu_2 : =\frac{1}{64 C (1+5M_0)}$.\\
For $i\geq 2$, we have
\begin{equation}\label{in2}
  |w_{1i}| \leq \frac{ h^{\prime}|D w|^2}{32n},
\end{equation}
and
\begin{equation}\label{inq4}
  |u_{1i}| \leq (C \mu_0+\frac{1}{1+3M_0}) |D u|^2+2C |D u|.
\end{equation}
Then we continue to compute $F^{ij}G_{ij}$. By using \eqref{eqn7}, \eqref{eqn4} and \eqref{eqn6} it  follows that
\begin{align}\label{}
  F^{ij}G_{ij} \geq &  - C(n, k,L_2, \O)\mu_0 \mathcal{F} |D u|^2 + \frac{2F^{ij}u_{ij1}(1-\varphi_z d)}{w_1}\nonumber \\
  &- \frac{4F^{ij}(\varphi_{i z}u_{j1}d + \varphi_{zz}u_{i1}u_j d+ \varphi_z u_{i1}d_j)}{w_1}\nonumber\\
   &- \frac{2 F^{ij} u_{ij}[(\varphi_{zp}+\varphi_{zz} u_p )d+\varphi_z d_p]}{w_1}-2\frac{F^{ij}w_{1i}w_{1j}}{w^2_1}\nonumber\\
  & -C(n, k, L_2, \alpha_0, \O) \mathcal{F}|D u| + h^{\prime\prime} F^{11}u^2_1 + h^{\prime} F^{ij}u_{ij}.
\end{align}
The equation \eqref{EMQX} is $k$- homogenous, and differentiating it gives
\begin{align}
  F^{ij}u_{ij} =& k f, \label{eqn8}\\
  F^{ij}u_{ij1} = & f_1 + f_u u_1.\label{eqn9}
\end{align}
We obtain from \eqref{in2}, \eqref{inq4}, \eqref{eqn8}, and \eqref{eqn9} that
\begin{align}\label{}
  F^{ij}G_{ij} \geq & - C(n, k,L_2,M_0, \O)\mu_0 \mathcal{F} |D u|^2 +h^{\prime\prime} F^{11}u^2_1-\frac{( h^{\prime})^2\mathcal{F} |Dw|^2}{32}\nonumber\\
  &- C(n, k,L_2,L_1,M_0,\alpha_0, \O) \mathcal{F} |D u|-C(L_1,n,L_2) .
\end{align}

\eqref{inq5} tells us if
$\mu_0 \leq \mu_3 : = \frac{1}{32 C(1+5M_0)^2(n-k+1)}$ small, we get
\begin{equation}\label{inq16}
  \frac{h^{\prime\prime}F^{11}u^2_1}{8} \geq C \mu_0\mathcal{F} |D u|^2.
\end{equation}
By definition of $h$, we have $h^{\prime\prime}=(h^{\prime})^2$.
Thus from \eqref{inq5}
\begin{equation}\label{inq17}
  \frac{h^{\prime\prime}F^{11}u^2_1}{8} \geq \frac{( h^{\prime})^2\mathcal{F} |Du|^2}{32}.
\end{equation}
If we assume further $|Du|^2(x_0) \geq M_3: = 32(n-k+1)(1+5M_0)^2C$, we get
\begin{equation}\label{inq18}
  \frac{h^{\prime\prime}F^{11}u^2_1}{8} \geq  C \mathcal{F} |D u|.
\end{equation}
From above estimates \eqref{inq5}, \eqref{inq16}, \eqref{inq17}, and \eqref{inq18}, we obtain
\begin{equation}\label{inq6}
  0 \geq F^{ij}G_{ij} \geq \frac{ h^{\prime \prime} \mathcal{F} |Du|^2}{32(n-k+1)} - C.
\end{equation}
Finally, inequality \eqref{inq10} in the Lemma~\ref{CW1} implies that
\begin{equation}\label{inq21}
  0 \geq \frac{ h^{\prime \prime} \mathcal{F} |Du|^2}{32(n-k+1)} - C > 0,
\end{equation}
provided that $|Du|(x_0) \geq M_4 := \frac{32(n-k+1)(1+5M_0)^2 C[(1+5M_0)128C^k_{n-1}]^{k-1}}{C^{k-1}_ {n-1}}+1$.\\
Inequality \eqref{inq21} is a contradiction.\\
We conclude that if $\mu_0 = \min\{\widetilde{\mu},\mu_1,\mu_2,\mu_3\}$, we have the estimate
\begin{equation}\label{inq20}
|Du|(x_0) \leq \max\{M_1,M_2,M_3,M_4\}.
\end{equation}
Thus we get the estimate of $G(x_0)$. \\
Because $G$ attains its maximum at $x_0$ and $h,g$ is bounded from below, the gradient estimate of $u$ follows the above three cases.

\end{proof}

\section{$C^2$ priori estimates}
We come now to the a priori estimates of second derivative necessary for our existence theorem. For these bounds we restrict attention to the following problem
\begin{equation}\label{NP}
\left\{
\begin{array}{rccl}
 \sigma_k(D^2 u)&=& f(x,u)&
\hbox{ in } \Omega \subset\mathbb{R}^n,\\
u_{\nu}&=&\varphi(x,u)  &\hbox{ on } \partial \Omega.\\
\end{array}
\right.
\end{equation}

\begin{theorem}\label{C^2}
Let $\Omega$ be a bounded $C^{4}$ uniformly convex domain in $\mathbb{R}^n$, $\nu$ is the outer unit normal vector of $\p \O$. If $u\in C^4(\Omega)\cap C^3(\overline{\Omega})$ a $k$- admissible solution of Neumann problem \eqref{NP}. Where $f\in C^{2}(\overline{\Omega}\times\mathbb{R})$ is positive and $\varphi \in C^{3}(\overline{\Omega}\times\mathbb{R})$ is non-increasing in $z$. Then we have
\begin{equation}
\sup\limits_{\overline{\Omega}} |D^2 u| \leq C,
\end{equation}
where $C$ depends only on $n$, $k$, $||u||_{C^1(\overline{\Omega})}$, $|| f||_{C^2(\overline{\Omega}\times[-M_0,M_0])}$, $\min f$,\\ $||\varphi||_{C^{3}(\overline{\Omega}\times[-M_0,M_0])}$ and convexity of $\Omega$, where $M_0=\sup\limits_{\Omega}|u|$.
\end{theorem}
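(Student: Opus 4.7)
The plan is to split the $C^{2}$ estimate into a global-to-boundary reduction and a boundary bound on each of the three second-derivative components. Since $\lambda(D^{2}u)\in\Gamma_{k}$, bounding $\lambda_{\max}(D^{2}u)=\max_{|\xi|=1}u_{\xi\xi}$ from above controls every eigenvalue, so it suffices to produce such an upper bound.

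\emph{Reduction to the boundary.} I would test the function $W(x,\xi)=\log u_{\xi\xi}+h(|Du|^{2})+\chi(u)$ on $\overline{\Omega}\times\mathbb{S}^{n-1}$, with $h$ and $\chi$ chosen so that their derivatives produce a coercive negative contribution in $F^{ij}W_{ij}$. Differentiating $\widetilde F(D^{2}u)=\widetilde f$ once and twice in $\xi$, and invoking concavity of $\widetilde F$ on $\Gamma_{k}$, the identity $\mathcal{F}=(n-k+1)\sigma_{k-1}\ge c>0$ (from MacLaurin's inequality and the equation), and the $C^{1}$ bound, the maximum principle yields a universal bound on $W$ unless its maximum lies on $\partial\Omega$. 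This reduces the task to a pointwise bound for $u_{\xi\xi}$ at a boundary point $x_{0}$, which, after decomposing $\xi$ into tangential and normal parts, reduces to bounding $u_{\tau\tau}(x_{0})$, $u_{\tau\nu}(x_{0})$, and $u_{\nu\nu}(x_{0})$.

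\emph{Tangential and mixed boundary estimates.} Differentiating the Neumann condition $u_\nu=-u+\varphi$ twice along a unit tangent $\tau$ at $x_{0}$ gives $u_{\tau\tau}=u_\nu\,\mathrm{II}(\tau,\tau)+O(|Du|+\|\varphi\|_{C^{2}})$, so uniform convexity ($\mathrm{II}\ge c_{0}>0$) with the $C^{1}$ bound yields $|u_{\tau\tau}|_{\partial\Omega}\le C$. For $u_{\tau\nu}$ I would use the classical tubular barrier in $\Omega_\mu$: the function $\Phi^{\pm}=\pm\bigl(D_\tau u+u\langle\nu,\tau\rangle-D_\tau\varphi\bigr)+A d+B|x-x_{0}|^{2}$ can be arranged to satisfy $F^{ij}\Phi^{\pm}_{ij}\le 0$ in $\Omega_\mu\cap B_\mu(x_{0})$ and $\Phi^{\pm}\ge 0$ on its parabolic boundary by choosing $A,B$ large (using $F^{ij}u_{ij}=kf$ and $\mathcal{F}\ge c$), whence Hopf's lemma at $x_{0}$ gives the desired bound.

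\emph{Double-normal estimate (the main obstacle).} Let $x_{0}$ be a maximum point of $u_{\nu\nu}$ on $\partial\Omega$. Any natural barrier built from $u_\nu+u-\varphi$ produces, upon application of $F^{ij}$, the trouble term $\sum_{i,j,k}F^{ij}u_{ik}D_j\nu^{k}$, which has no definite sign and cannot be dominated by purely elementary devices. Following the spirit of \cite{LTU,Tru,ITW,Urbas}, I would introduce a new barrier of the form
\[
\Psi(x)=-u_\nu(x)-u(x)+\varphi(x)+\alpha\,\psi(Du)(x)-A\,d(x)+B|x-x_{0}|^{2},
\]
where $\psi(Du)$ is a carefully chosen quadratic in selected components of $Du$ whose Hessian generates a coercive good term of the form $\alpha F^{ij}u_{ik}u_{jk}$. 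By Cauchy--Schwarz, using $|D\nu|\le C_{0}$ and $\mathcal{F}\ge c$, this good term absorbs the trouble term, and the remaining parameters $A,B$ can be tuned so that $F^{ij}\Psi_{ij}\le 0$ in $\Omega_\mu\cap B_\mu(x_{0})$ while $\Psi\ge 0$ on its parabolic boundary; since $\Psi(x_{0})=0$, Hopf's lemma at $x_{0}$ then forces a one-sided bound on $u_{\nu\nu}(x_{0})$. The principal difficulty is identifying the correct $\psi(Du)$ so that the extracted good term dominates the trouble term throughout a uniform boundary neighborhood; this is where the uniform convexity of $\Omega$ and Lemma~\ref{CW1} are both genuinely used. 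Combining this with the reduction and the tangential-mixed estimates produces the global $C^{2}$ bound.
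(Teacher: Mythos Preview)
Your proposal contains a genuine error in the tangential boundary estimate and misidentifies where the real difficulty lies.

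\textbf{The tangential claim is false.} Differentiating the Neumann condition $u_\nu=\varphi$ twice along a tangent $\tau$ gives information about $u_{\tau\tau\nu}$, not about $u_{\tau\tau}$: one obtains $u_{\tau\tau\nu}=-2u_{l\tau}D_\tau\nu^l+\text{(lower order)}$, which involves second derivatives on the right and cannot be solved for $u_{\tau\tau}$. Your formula $u_{\tau\tau}=u_\nu\,\mathrm{II}(\tau,\tau)+O(\cdot)$ is the Dirichlet identity, not the Neumann one. The paper explicitly remarks that for the Neumann problem one does \emph{not} have an a priori boundary double-tangential bound; this is precisely what made the problem open for $2\le k\le n-1$.

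\textbf{How the paper organizes the argument.} The paper does not attempt a direct tangential bound. Instead, its reduction lemma (Lemma~\ref{gl}) proves $\sup_{\overline\Omega,\xi}u_{\xi\xi}\le C_0(1+M)$ with $M=\sup_{\partial\Omega}|u_{\nu\nu}|$: at a boundary maximum with $\xi$ tangential, the second tangential differentiation of the boundary condition yields $u_{\xi\xi\nu}\le -2\kappa u_{\xi\xi}+C(1+|u_{\nu\nu}|)$ (here uniform convexity enters), and combining this with the Hopf inequality $v_\nu\ge 0$ for the auxiliary function $v=u_{\xi\xi}-v'+K_1|x|^2+K_2|Du|^2$ closes the estimate. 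Thus the tangential case is absorbed into the double-normal bound, not handled independently.

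\textbf{The double-normal barrier.} Your sketch is essentially the Lions--Trudinger--Urbas barrier with an added $|Du|^2$ term. The obstruction is that the Cauchy--Schwarz step you invoke only yields $\big|\sum F^{ij}u_{ik}D_j\nu^k\big|\le \epsilon\sum F^{ii}u_{ii}^2+C_\epsilon\mathcal F$, and the resulting $C_\epsilon\mathcal F$ must then be absorbed by a term proportional to $M$ itself; the inequality does not close for $2\le k\le n-1$. The paper's innovation is a \emph{multiplicative} barrier $P=g(x)\bigl(Du\cdot Dh-\psi\bigr)-(A+\sigma M)h$ in the full strip $\Omega_\mu$, with $g=1-\beta h$. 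The critical-point condition $P_i=0$ then forces $u_{ii}\approx \tfrac{2}{3}(A+\sigma M)$ for every index $i$ with $|h_i|$ bounded below, and a four-case analysis using the algebraic identities $\sigma_{k-1}(\lambda|1)=\sigma_{k-1}(\lambda|1i)+u_{ii}\sigma_{k-2}(\lambda|1i)$ produces the needed good term. None of this structure is visible in your $\Psi$.
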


It is well known that it is easy to get the estimates for second tangential-normal derivative of the solution on the boundary.
 We here follow the same line as in Lions-Trudinger-Urbas \cite{LTU} with minor changes.
\begin{lemma}\label{tn}
Denoting the tangential direction $\tau$ at any point $y\in \partial \Omega$, we have
\begin{equation}
|D_{\tau \nu} u (y)| \leq C,\label{lemma4.1}
\end{equation}
where the constant $C$ only depends on $||u||_{C^1}$, $||\varphi||_{C^1}$ and $||\partial \Omega||_{C^2}$.
\end{lemma}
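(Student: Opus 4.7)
The plan is to differentiate the Neumann boundary condition tangentially along $\partial\Omega$ and then solve algebraically for $D_{\tau\nu} u$. This is the standard recipe for extracting mixed tangential--normal second derivatives from an oblique boundary relation, and it uses nothing beyond first derivatives of the data.

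Concretely, I would first extend $\nu$ smoothly into a neighborhood of $\partial\Omega$ by setting $\nu = -Dd$ as already done in the paper, so that $\nu$ is $C^{2}$ with $|D\nu|$ bounded in terms of $\|\partial\Omega\|_{C^{2}}$. Fix $y \in \partial\Omega$ and a unit tangent vector $\tau$ at $y$. Since the Neumann condition $u_i\nu^i = \varphi(x,u)$ holds along $\partial\Omega$, I can differentiate it in the tangential direction $\tau$ (as a derivative along the boundary), obtaining
\begin{equation*}
u_{ij}\tau^{j}\nu^{i} + u_{i}(D_{j}\nu^{i})\tau^{j} = \varphi_{j}\tau^{j} + \varphi_{u}\, u_{j}\tau^{j}.
\end{equation*}
The first term on the left is precisely $D_{\tau\nu}u(y)$. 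Solving algebraically gives
\begin{equation*}
D_{\tau\nu}u(y) = \tau^{j}\varphi_{j} + \varphi_{u}\,\tau^{j}u_{j} - \tau^{j}u_{i}\,D_{j}\nu^{i}.
\end{equation*}

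Taking absolute values and using $|\tau| = |\nu| = 1$, the right hand side is controlled by $\|u\|_{C^{1}}$, $\|\varphi\|_{C^{1}}$, and $\|\partial\Omega\|_{C^{2}}$, which yields the claimed estimate \eqref{lemma4.1}.

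Honestly, there is no serious obstacle in this lemma: the Neumann condition directly couples $u_{\nu}$ to boundary data, so one tangential differentiation suffices to recover every mixed second derivative $u_{\tau\nu}$ at the boundary, with no use whatsoever of the $k$-admissibility or the Hessian equation itself. The genuine difficulty in the $C^{2}$ theory — and where the convexity of $\Omega$, the structure of $\sigma_k$, and a delicate barrier construction will be required — is the double normal derivative $u_{\nu\nu}$, which this lemma deliberately sidesteps.
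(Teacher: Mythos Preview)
Your proposal is correct and follows essentially the same approach as the paper: differentiate the Neumann condition $u_\nu=\varphi$ tangentially along $\partial\Omega$, then isolate $u_{\tau\nu}$ and bound the remaining first-order terms. The paper phrases the tangential differentiation via the projection $C^{ij}=\delta_{ij}-\nu^i\nu^j$ before contracting with $\tau$, but since $\tau^i C^{ij}=\tau^j$ this is exactly your computation.
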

\begin{proof}
Taking tangential derivative to the boundary condition
\begin{equation}
u_{\nu} = \varphi,
\end{equation}
as in (\ref{eqn3}) we have

\begin{align}\label{}
  C^{ij} u_{j \nu} + C^{ij} u_l D_j\nu^{l} =  C^{ij} D_j \varphi.\label{eqn3a}
\end{align}
Take inner pruduct with $\tau^i$, it follows that
\begin{equation}\label{}
   \tau^i u_{li} \nu^l + u_l D_i \nu^{l}\tau^{i} = D_{i} \varphi \tau^{i}.
\end{equation}
So
\begin{equation}\label{}
  |u_{\tau \nu}| \leq |D_{i} \varphi \tau^{i} - u_l D_i \nu^{l}\tau^{i}| \leq C.
\end{equation}
\end{proof}

Now we again use the technique of Lions-Trudinger-Urbas~\cite{LTU},  we can reduce the second derivative estimates of the solution  to the boundary double normal derivative bounds.
\begin{lemma}\label{gl}
Let $ M= \sup\limits_{\partial\Omega} |u_{\nu\nu}|$.
Then
\begin{equation} \label{F1}
\sup\limits_{\overline{\Omega},\xi\in \mathbb{S}^{n-1}} u_{\xi\xi} \leq C_0 (1+M),
\end{equation}
where $C_0$ depends only on $||u||_{C^1}$,$||\varphi||_{C^3}$,
  $||\partial\Omega||_{C^4}$,$||f||_{C^2}$, $\min f$, and convexity of $\partial \Omega$.
\end{lemma}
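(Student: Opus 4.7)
The plan is to bound $v(x):=\sup_{\xi\in\mathbb{S}^{n-1}}u_{\xi\xi}(x)$ (the largest eigenvalue of $D^2u$) globally on $\overline{\Omega}$ by locating its maximum, following the framework of Lions--Trudinger--Urbas. Let $M_1:=\sup_{\overline{\Omega}\times\mathbb{S}^{n-1}}u_{\xi\xi}$ be attained at some $(x_0,\xi_0)$; we aim at $M_1\leq C_0(1+M)$.

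\textbf{Interior case.} If $x_0\in\Omega$, then the scalar function $u_{\xi_0\xi_0}$ has an interior maximum at $x_0$, so $\widetilde{F}^{ij}(u_{\xi_0\xi_0})_{ij}(x_0)\leq 0$. Differentiating the equation $\widetilde{F}(D^2u)=\widetilde{f}$ twice in direction $\xi_0$ and using the concavity of $\widetilde{F}=\sigma_k^{1/k}$ (so $\widetilde{F}^{ij,pq}u_{ij\xi_0}u_{pq\xi_0}\leq 0$), I get
\begin{equation*}
\widetilde{F}^{ij}u_{ij\xi_0\xi_0}\geq \widetilde{f}_{\xi_0\xi_0},
\end{equation*}
hence $\widetilde{f}_{\xi_0\xi_0}(x_0)\leq 0$. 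Since $\widetilde{f}_{\xi\xi}$ is at worst linear in $u_{\xi\xi}$ (through the $u$-dependence of $f$), this rearranges to a bound in terms of $\|u\|_{C^1}$, $\|f\|_{C^2}$ and $\min f$. If the sign of $\widetilde{f}_u$ is unfavorable, I would first add the auxiliary term $\tfrac{a}{2}|Du|^2+bu$ to $u_{\xi_0\xi_0}$ so that the extra contributions $a\widetilde{F}^{ij}u_iu_j\geq 0$ and $b\widetilde{F}^{ij}u_{ij}=bk^{1/k}f^{1/k}>0$ absorb the bad sign.

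\textbf{Boundary case.} If $x_0\in\partial\Omega$, decompose $\xi_0=\alpha\,\nu(x_0)+\beta\,\tau$ with $\tau\in T_{x_0}\partial\Omega$ unit and $\alpha^2+\beta^2=1$. Then
\begin{equation*}
u_{\xi_0\xi_0}(x_0)=\alpha^2 u_{\nu\nu}(x_0)+2\alpha\beta\, u_{\tau\nu}(x_0)+\beta^2 u_{\tau\tau}(x_0).
\end{equation*}
By the maximality $u_{\tau\tau}(x_0)\leq M_1=u_{\xi_0\xi_0}(x_0)$, together with $|u_{\nu\nu}(x_0)|\leq M$ and $|u_{\tau\nu}(x_0)|\leq C$ from Lemma~\ref{tn}, we obtain $\alpha^2 M_1\leq \alpha^2 M+2|\alpha\beta|C$, i.e.\ $M_1\leq M+2C|\beta/\alpha|$, which closes the estimate whenever $|\alpha|$ is bounded below.

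\textbf{Main obstacle.} The genuine difficulty is the boundary maximum at which $\xi_0$ is nearly tangential, $|\alpha|\ll 1$: the plain decomposition offers no control over the pure tangential second derivative $u_{\tau\tau}$. The remedy, which I would carry out here, is to replace $v$ by an auxiliary test function of the form
\begin{equation*}
G(x,\xi)=u_{\xi\xi}(x)+A\bigl(1-(\xi\cdot\nu(x))^2\bigr)+\tfrac{a}{2}|Du(x)|^2+bu(x),
\end{equation*}
where $\nu$ is the smooth extension $\nu=-Dd$ in the tubular neighborhood $\Omega_{\widetilde\mu}$ (available thanks to $\partial\Omega\in C^4$), and $A=A(M)$ is chosen large enough that the $\xi$-maximum of $G$ at any boundary point forces $|\xi_0\cdot\nu(x_0)|\geq 1/2$. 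The uniform convexity of $\Omega$ (providing $-D^2d\geq c_0 I$ up to the boundary) gives the favorable signs needed when the bias term $A(1-(\xi\cdot\nu)^2)$ is carried through the interior differentiation. Rerunning the interior computation above on $G$ and invoking the boundary decomposition only for $|\alpha|\geq 1/2$ then yields the claimed bound $M_1\leq C_0(1+M)$. The delicate choice of $A$, $a$, $b$ together with the concavity-based estimate of $\widetilde{F}^{ij,pq}$ and the extended $\nu$ is the technical heart of the argument.
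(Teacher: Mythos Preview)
Your proposal has a genuine gap in the handling of the tangential boundary case. The bias term $A(1-(\xi\cdot\nu)^2)$ you suggest is \emph{maximized} when $\xi$ is tangential, so increasing $A$ pushes the $\xi$-maximizer \emph{toward} the tangent space, not away from it; this is the opposite of what you claim. Even if you reverse the sign and use $-A(1-(\xi\cdot\nu)^2)$, forcing $|\xi_0\cdot\nu|\geq\tfrac12$ at a boundary maximum would require $A$ to dominate $u_{\tau\tau}(x_0)-u_{\nu\nu}(x_0)$, which can be as large as $M_1+M$; since $M_1$ is the unknown you are trying to bound, the argument becomes circular. Your remark that ``uniform convexity \ldots\ gives the favorable signs needed'' when the bias term is differentiated is too vague to rescue this---the second derivatives of $(\xi\cdot\nu(x))^2$ produce terms of size $A$ times bounded quantities, not terms that can absorb $M_1$.

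The paper's approach is structurally different: rather than trying to avoid the tangential case, it confronts it directly. One subtracts from $u_{\xi\xi}$ a correction $v'(x,\xi)=2(\xi\cdot\nu)\,\xi'\cdot(D\varphi-u_l D\nu^l)$, linear in $Du$, specifically engineered so that (i) the non-tangential case reduces exactly to the normal case via the identity $v(x_0,\xi)=\alpha^2 v(x_0,\tau)+\beta^2 v(x_0,\nu)$, and (ii) in the purely tangential case one can apply the Hopf lemma $v_\nu(x_0)\geq 0$. The key input is obtained by differentiating the Neumann condition twice tangentially: this yields $u_{\xi\xi\nu}\leq -2\,\xi^p\xi^i u_{li}\,D_p\nu^l + C(1+|u_{\nu\nu}|)$, and after bounding the mixed derivatives $u_{1i}$ ($i\neq 1$) via maximality in $\xi$, uniform convexity ($D_\xi\nu_\xi\geq\kappa>0$) turns this into $u_{\xi\xi\nu}\leq -2\kappa\,u_{\xi\xi}+C(1+M)$. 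Combined with the Hopf inequality this gives $u_{\xi\xi}(x_0)\leq C(1+M)$. The interior is handled with an additive $K_1|x|^2+K_2|Du|^2$; the role of $K_2|Du|^2$ is not merely that $\widetilde F^{ij}u_{li}u_{lj}\geq 0$ but that $\widetilde F^{ii}u_{ii}^2\geq \tfrac{1}{n}\widetilde f\,u_{11}$ via the inequality $\lambda_1\sigma_{k-1}(\lambda|1)\geq\tfrac{k}{n}\sigma_k$, which is what absorbs the $\widetilde f_z u_{\xi\xi}$ term.
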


\begin{proof}
We consider the function
\begin{equation}
v(x,\xi) := u_{\xi\xi} - v^{\prime}(x,\xi) + K_1 |x|^2 + K_2 |D u|^2,
\end{equation}
where $v^{\prime}(x,\xi):= 2 (\xi \cdot \nu)\xi^{\prime}\cdot (D \varphi -   u_l D \nu^{l})= a^l u_l  + b $,  $\xi^{\prime} = \xi - (\xi \cdot \nu)\nu$, $a^l = 2(\xi \cdot \nu)(\xi^{\prime l} \varphi_z  -   \xi^{\prime i} D_i \nu_l)$, and $b = 2 (\xi \cdot \nu)  \xi^{\prime l} \varphi_{x_l}$.
We compute
\begin{equation}
v_i = u_{\xi\xi i} -  D_i a^l u_l-  a^l u_{li} - D_i b + 2K_1 x_i + 2  \sum\limits_{l} K_2 u_l u_{li},
\end{equation}
and
\begin{eqnarray}\label{vf2}
v_{ij} =& u_{\xi\xi ij} -  D_{ij}a^l u_l - D_i a^l u_{lj}- D_j a^l u_{li} -  a^l u_{lij} - D_{ij}b \nonumber \\
 &+ 2K_1 \delta_{ij} + 2  K_2 \sum\limits_{l}  u_{li} u_{lj}+ 2 K_2 \sum\limits_{l}  u_l u_{lij}.
\end{eqnarray}

Taking first derivative of equation \eqref{EV}, we have
\begin{equation}\label{EVf1}
\widetilde{F}^{ij} u_{ijl} = \widetilde{f}_{x_l}+\widetilde{f}_{z}u_l.
\end{equation}

And we have from the concavity of $\sigma_k^{\frac{1}{k}} $
\begin{equation}\label{con}
\widetilde{F}^{ij} u_{ij \xi \xi} \geq \widetilde{F}^{ij} u_{ij \xi \xi} + \widetilde{F}^{ij,pq}u_{ij\xi}u_{pq\xi} = \widetilde{f}_{x_\xi x_\xi}+2\widetilde{f}_{x_\xi z} u_\xi + \widetilde{f}_{z} u_{\xi \xi}.
\end{equation}
Then we contract \eqref{vf2} with the $\widetilde{F}^{ij}$, using \eqref{con} and \eqref{EVf1},
\begin{eqnarray}
\widetilde{F}^{ij} v_{ij} =& \widetilde{F}^{ij}u_{\xi\xi ij} - \widetilde{F}^{ij}D_{ij}a^l u_l - 2  \widetilde{F}^{ij} u_{lj} D_i a^l-  \widetilde{F}^{ij} u_{lij} a^l  \nonumber  \\
&  - \widetilde{F}^{ij} D_{ij} b + 2 K_1 \sum\limits_{i}\widetilde{F}^{ii}  + 2 K_2 \sum\limits_{l} \widetilde{F}^{ij} u_{lj} u_{li} + 2 K_2 \sum\limits_{ijl} \widetilde{F}^{ij} u_{lij} u_l  \nonumber  \\
\geq &  - C_1 (||u||_{C^1},||\varphi||_{C^3},||\partial\Omega||_{C^4},
||f||_{C^2},\min f,K_2)
(\sum\limits_{i}\widetilde{F}^{ii}+1) \nonumber  \\
& +\widetilde{f}_z u_{\xi\xi}+ 2K_1 \sum\limits_{i}\widetilde{F}^{ii} + 2  K_2 \sum\limits_{l} \widetilde{F}^{ij} u_{li} u_{lj} - 2 \widetilde{F}^{ij} u_{lj} D_i a^l.
\end{eqnarray}
At interior maximum point, we assume $(u_{ij})$ is diagonal and $u_{11} \geq u_{22} \geq \cdots \geq u_{nn}$. So we have by \eqref{inqu12}
\begin{eqnarray}
  2 K_2\sum\limits_{i} \widetilde{F}^{ii}u^2_{ii} &\geq& 2 K_2 \sigma_{k}^{\frac{1}{k}-1}F^{11}u^2_{11} \nonumber\\
   &\geq& 2 K_2  \frac{\sigma_{k}^{\frac{1}{k}}}{n} u_{11}\nonumber\\
   & \geq & 2 K_2 \frac{\sigma_{k}^{\frac{1}{k}}}{n} u_{\xi\xi}.
\end{eqnarray}
We can assume $u_{\xi\xi} \geq 0$, otherwise we have the estimate \eqref{F1}. If we choose $K_2 \geq \frac{n|\widetilde{f}_z|}{2 \min \widetilde{f}} + 2$, we continue

\begin{eqnarray}
\sum\limits_{ij}\widetilde{F}^{ij}v_{ij} \geq & 2 \sum\limits_{i} \widetilde{F}^{ii} u_{ii}^2 - 2 C_2(||u||_{C^1},||\varphi||_{C^3},||\partial\Omega||_{C^3}) \sum\limits_{i}\widetilde{F}^{ii}|u_{ii}|  \nonumber\\
&+ 2K_1 \sum\limits_{i} \widetilde{F}^{ii} - C_1(\sum\limits_{i} \widetilde{F}^{ii} + 1) \nonumber\\
\geq & 2 \sum\limits_{i} \widetilde{F}^{ii} (|u_{ii}| - \frac{C_2}{2})^2 +(2 K_1-\frac{C_2^2}{2} - C_1)  \sum\limits_{i} \widetilde{F}^{ii} -C_1.
\end{eqnarray}
Now if we choose $K_1$ large, such that $K_1 \geq \frac{C_2^2}{2} + C_1$ and $K_1  (C_n^k)^{\frac{1}{k}}  > C_1$, by \eqref{inqu10} we have
\begin{equation}
\sum\limits_{ij}\widetilde{F}^{ij}v_{ij} > 0 .
\end{equation}
So $v(x,\xi)$ attains its maximum on $\partial \Omega$.\\
\noindent{\bf{Case a: $\xi$ is tangential.}}\\
We shall take tangential derivative twice to the boundary condition, first we rewrite (\ref{eqn3a}) as following
\begin{align}\label{}
u_{li}{\nu}^l = C^{ij}D_j \varphi - C^{ij}u_lD_{j}{\nu}^l + {\nu}^i{\nu}^j{\nu}^l u_{lj}.\label{eqn3aa}
\end{align}
So let's take tangential derivative (\ref{eqn3aa}) and we get
\begin{eqnarray}\label{}
C^{pq}D_{q}(u_{li}{\nu}^l) = C^{pq}D_{q}(C^{ij}D_j \varphi - C^{ij}u_lD_{j}{\nu}^l + {\nu}^i{\nu}^j{\nu}^l u_{lj}),\label{eqn3aaa}
\end{eqnarray}
it follows that
\begin{eqnarray*}\label{}
u_{lip}{\nu}^l = C^{pq}D_{q}(C^{ij}D_j \varphi - C^{ij}u_lD_{j}{\nu}^l + {\nu}^i{\nu}^j{\nu}^lu_{lj})+ {\nu}^p{\nu}^q{\nu}^lu_{liq} -C^{pq}u_{li}D_q{\nu}^l,
\end{eqnarray*}
and in above formula we take sum with $\xi^{i}\xi^{p}$, then we obtain
\begin{eqnarray}\label{}
u_{\xi\xi\nu} &=& -2  \xi^p \xi^i u_{li} D_p \nu^l- u_l \xi^p D_{ip} \nu^{l}\xi^{i} + u_{\nu\nu} \sum_{i} \xi^p D_{p} \nu^{i}\xi^{i}\nonumber\\
& &-\sum_{i} \xi^p\xi^i\nu^jD_p \nu^iD_{j}\varphi + {\varphi}_{z}u_{\xi\xi} + \xi^p\xi^i{\varphi}_{ip} \nonumber\\&&+ {\varphi}_{zz}u_{\xi}^2 + 2u_{\xi}{\xi}^i{\varphi}_{zi}.
\end{eqnarray}

So we have
\begin{eqnarray}\label{}
 u_{\xi\xi\nu} &\le & -2   \xi^p \xi^i u_{li} D_p \nu^l+ {\varphi}_{z}u_{\xi\xi} \nonumber\\&&+ C(||u||_{C^1},||\partial \Omega||_{C^3},||\varphi||_{C^2})+ C(||\partial  \Omega||_{C^2}) |u_{\nu \nu}|\nonumber\\
 &\leq & -2   \xi^p \xi^i u_{li} D_p \nu^l + C+ C |u_{\nu \nu}|.
\end{eqnarray}
Here in the second inequality we assume that $\varphi$ is non-increasing in $z$.\\
If we assume $\xi= e_1$, it is easy to get the bound for $u_{1i}(x_0)$ for $i \neq 1$ from the maximum of $v(x,\xi)$ in the $\xi$ direction. In fact, we can assume $\xi(t) = \frac{(1,t,0,\cdots,0)}{\sqrt{1+t^2}}$. Because $v(x,\xi)$ attains its maximum at $\xi(0)$. Then we have
\begin{eqnarray}
  0 &=& \frac{\partial v(x_0,\xi(t))}{\partial t}|_{t=0} \nonumber\\
    &=& 2 u_{ij}(x_0) \frac{d \xi^i(t)}{d t}|_{t=0} \xi^j(0) - \frac{\partial v^{\prime}(x_0,\xi(t))}{\partial t}|_{t=0}\nonumber\\
    &=& 2 u_{11}\frac{-t}{(1+t^2)^\frac{3}{2}}|_{t=0} + 2 u_{12}(\frac{1}{\sqrt{1+t^2}}+
    \frac{-t^2}{(1+t^2)^\frac{3}{2}})|_{t=0}-
    \frac{\partial v^{\prime}}{\partial t}|_{t=0}.
\end{eqnarray}
So we have
\begin{equation}\label{}
  |u_{12}| \leq C(||\varphi||_{C^1},||u||_{C^1},||\partial \Omega||_{C^2}).
\end{equation}
Similarly, we have for all $i \neq 1$,
\begin{equation}\label{}
  |u_{1i}| \leq C(||\varphi||_{C^1},||u||_{C^1},||\partial \Omega||_{C^2}).
\end{equation}
Due to $D_{1} \nu_{1} \geq \kappa >0$, we have
\begin{equation}\label{100}
  u_{\xi\xi\nu} \leq - 2 \kappa u_{\xi\xi} + C(1+|u_{\nu\nu}|).
\end{equation}
On the other hand, we have from the Hopf lemma, (\ref{lemma4.1})  and  $\sum\limits_i a^{i}\nu^i=0$,
\begin{eqnarray}
  0 & \leq & v_{\nu}\nonumber \\
   &=& u_{\xi\xi\nu} - D_{\nu}a^l u_l - a^l u_{l\nu} -b_{\nu}+ 2K_1(x\cdot \nu) + 2 K_2 \sum_l u_{l}u_{l\nu}\nonumber\\
   & \leq & u_{\xi \xi \nu} + C(||u||_{C^1}, ||\partial \Omega||_{C^2}, ||\varphi||_{C^2},K_1,K_2) + 2 K_2 \varphi u_{\nu\nu}.\label{101}
\end{eqnarray}
Combining (\ref{100}) and (\ref{101}), we therefore deduce
\begin{equation}\label{}
  u_{\xi \xi}(x_0) \leq C(1+| u_{\nu \nu}|(x_0)).
\end{equation}

{\bf{Case b: $\xi$ is non-tangential.}}\\
We write $\xi = \alpha \tau + \beta \nu$, where $\alpha = \xi \cdot \tau$, $|\tau|=1$, $\tau\cdot \nu =0$, $\beta=\xi\cdot\nu\neq0$ and $\alpha^2 +\beta^2 =1$.
\begin{eqnarray}
  u_{\xi\xi} &=& \alpha^2 u_{\tau\tau}+\beta^2 u_{\nu\nu}+2\alpha \beta u_{\tau \nu}  \nonumber\\
   &=& \alpha^2 u_{\tau\tau}+ \beta^2 u_{\nu\nu} + 2\alpha \beta(D_i \varphi \tau^i - u_{l} D_i \nu^l \tau^i).
\end{eqnarray}
By definition of $v(x,\xi)$, we have
\begin{eqnarray}
  v(x_0,\xi) &=& \alpha^2 v(x_0,\tau) + \beta^2 v(x_0,\nu) \nonumber \\
   &\leq& \alpha^2 v(x_0,\xi)+\beta^2 v(x_0,\nu).
\end{eqnarray}
Hence
\begin{equation}\label{}
  v(x_0,\xi)\leq v(x_0,\nu).
\end{equation}
Then we get the estimate,
\begin{equation}\label{}
  u_{\xi\xi}(x_0)\leq C_0(||u||_{C^1},||\varphi||_{C^3},
  ||\partial\Omega||_{C^4},||f||_{C^2},\min f,\kappa)
  (1+|u_{\nu\nu}(x_0)|),
\end{equation}
so that this case is also reduced to the purely normal case.
\end{proof}

\subsection{Second Normal Derivative Bounds On The Boundary}
In this section, we consider the double normal derivative estimate which is the most difficulty part in the Neumann problem for Hessian equations. Note we do not know boundary double tangential bound apriori, or it is hard to get this estimate due to the Neumann boundary condition in general. Compare this with Dirichlet problem in (\cite{CNS1}, \cite{CNS2}, \cite{T1}).\\
We give some definitions first.
Let
\begin{equation}\label{}
  h(x) = -d(x) +  d^2(x).
\end{equation}
We know from the classic book \cite{GT} section 14.6 that $h$ is $C^4$ in $\Omega_{\mu}$
for some constant $\mu \leq \widetilde{\mu}$ small depending on $\Omega$.
In terms of a principal coordinate system, see \cite{GT}  section 14.6, we have
\begin{equation}\label{Dd}
  [-D^2d(x_0)] = diag[\frac{\kappa_1(y_0)}{1-\kappa_1(y_0) d(x_0)}, \cdots , \frac{\kappa_{n-1}(y_0)}{1- \kappa_{n-1}(y_0)d(x_0)}, 0],
\end{equation}
and
\begin{equation}\label{}
  -Dd(x_0) = \nu(y_0).
\end{equation}
So $h$ also satisfied the following properties in $\Omega_{\mu}$:
\begin{equation}\label{h3}
   -\mu + \mu^2 \leq h \leq 0,
\end{equation}
\begin{equation}\label{h1}
  2 \geq |D h| \geq \frac{1}{2},
\end{equation}
\begin{equation}\label{h1}
  \frac{1}{3}k_1 \delta_{ij} \geq D^2 h \geq 2  k_0 \delta_{ij},
\end{equation}
\begin{equation}\label{h1}
   F^{ij} h_{ij} \geq k_0 (\mathcal{F}+1),
\end{equation}
provided $\mu \leq \widetilde{\mu}$ small depend on $||\partial \Omega||_{C^2}$. Here $k_1$ and $k_0$ are positive constants depend on $\kappa : = (\kappa_1, \cdots, \kappa_n)$.
It is easy to see
\begin{equation}\label{}
  \frac{Dh}{|Dh|} = \nu,
\end{equation}
for unit outer normal $ \nu$ on the boundary.\\
In order to do this estimate we construct barrier functions of $u_{\nu}$ on the boundary. Motivated by \cite{LTU}, \cite{Tru}, \cite{ITW} and \cite{Urbas}, we introduce the following functions.
In $\overline{\Omega}_{\mu}$,  we denote
\begin{equation}\label{}
  g(x):=1-\beta h,
\end{equation}
\begin{equation}\label{}
  G(x):= (A+\sigma M)h(x),
\end{equation}
\begin{equation}\label{}
  \psi(x) := |Dh|(x)\varphi(x,u).
\end{equation}
where $\sigma$, $\beta$, $\mu$, $A$ are positive constants to be chosen later.\\
Now we consider the sub barrier function,
\begin{equation}\label{PPP}
  P(x):=g(x)(Du \cdot Dh(x) - \psi(x)) - G(x).
\end{equation}
And we want to prove the following lemma.
\begin{lemma}\label{3.2}
Fix $\sigma = \frac{1}{2}$, for any $x \in \overline{\Omega}_{\mu}$ , if chosen $\beta$ large, $\mu$ small, $A$ large in proper sequence, we have
\begin{equation}\label{}
  P(x) \geq 0.
\end{equation}
\end{lemma}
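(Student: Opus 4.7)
My plan is a barrier/minimum-principle argument in $\Omega_\mu$ with three ingredients: check $P = 0$ on $\partial\Omega$, check $P \ge 0$ on the inner rim $\{d = \mu\}$, and rule out a negative interior minimum via the linearized operator $F^{ij}\partial_{ij}$. On $\partial\Omega$, since $h=0$, $|Dh|=1$, and $Dh = -Dd = \nu$, the Neumann condition gives $Du\cdot Dh - \psi = u_\nu - \varphi = 0$, and $G = 0$, so $P \equiv 0$ there. On $\{d = \mu\}$, one has $h = -\mu + \mu^2 < 0$ and $|Q| := |Du\cdot Dh - \psi| \le C$ by the $C^1$ estimate, hence
\[
P \;=\; gQ - G \;\ge\; -(1+\beta\mu)C + (A+\sigma M)(\mu - \mu^2) \;\ge\; 0
\]
once $A$ is taken large, independently of $M$.

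For the interior, I would argue by contradiction: assume $P$ attains a negative minimum at some $x_0 \in \Omega_\mu$, then $DP(x_0) = 0$ and $F^{ij}P_{ij}(x_0) \ge 0$; the goal is to force $F^{ij}P_{ij}(x_0) < 0$. Direct differentiation gives
\[
F^{ij}P_{ij} \;=\; -\beta Q\, F^{ij} h_{ij} \;-\; 2\beta F^{ij} h_i Q_j \;+\; g F^{ij} Q_{ij} \;-\; (A+\sigma M) F^{ij} h_{ij},
\]
while expansion of $Q_{ij}$, combined with the differentiated equation $F^{ij}u_{ijk} = f_{x_k} + f_u u_k$, yields
\[
F^{ij} Q_{ij} \;=\; h_k(f_{x_k} + f_u u_k) \;+\; 2 F^{ij} u_{ki} h_{jk} \;+\; u_k F^{ij} h_{ijk} \;-\; F^{ij} \psi_{ij}.
\]
The uniform convexity estimate $F^{ij} h_{ij} \ge k_0(\mathcal{F} + 1)$ supplies the dominating negative term $-(A+\sigma M) k_0 \mathcal{F}$. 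The first-order and curvature-of-$h$ pieces, together with $F^{ij}\psi_{ij}$, are controlled by $C\mathcal{F}$ plus a bounded multiple of $kf$ (from the $\varphi_z\, F^{ij}u_{ij}$ contribution), and the $-\beta Q F^{ij}h_{ij}$ term is absorbed into $-A k_0 \mathcal{F}$ after $A$ is chosen large compared to $\beta$.

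The main obstacle is the trouble term $2gF^{ij}u_{ki}h_{jk}$, which is the Hessian-equation analogue of the $F^{ij} u_{ik} D_j \nu^k$ term flagged in the introduction. Since $|u_{ii}| \le C(1+M)$ by Lemma~\ref{gl}, a naive bound gives size $O(M\mathcal{F})$, matching the good term to leading order, so a clever cancellation is needed. I would exploit the critical-point condition $DP(x_0) = 0$, which forces $Q_i = \lambda h_i$ at $x_0$ with $\lambda = (\beta Q + A + \sigma M)/g$; rotating coordinates so that $Dh \parallel e_1$, the identity $Q_i = u_{ki}h_k + u_k h_{ki} - \psi_i$ then pins down $u_{11}$ and $u_{1i}$ for $i \ge 2$ in terms of $\lambda$ and tangential data. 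Substituting back, the trouble term splits into a piece proportional to $\lambda$ that combines constructively with $-(A+\sigma M) F^{ij} h_{ij}$ and a tangential remainder absorbable by $\varepsilon F^{ij} h_{ij}$ after Cauchy--Schwarz. Choosing the constants in the proper sequence $\beta$ large (to handle $Q$), $\mu$ small (so $g \in [\tfrac12, \tfrac32]$ and $\beta\mu$ stays controlled), then $A$ large (to close both the interior inequality and the inner-rim lower bound) delivers $F^{ij}P_{ij}(x_0) < 0$, the desired contradiction. The fixed value $\sigma = \tfrac12$ appears to be precisely calibrated so that, after the critical-point reduction, the $M$-coefficient of the dominating term strictly exceeds the $M$-coefficient of what remains of the trouble term, and this is where I expect the most delicate calculation to occur.
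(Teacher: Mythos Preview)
Your setup is correct: the boundary values $P|_{\partial\Omega}=0$ and $P|_{\{d=\mu\}}\ge 0$ are as in the paper, and your expansion of $F^{ij}P_{ij}$ and identification of the trouble term $2gF^{ij}u_{li}h_{lj}$ are accurate. The gap is in your treatment of that trouble term.

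You rotate so that $Dh\parallel e_1$ and use $DP(x_0)=0$ to pin down $u_{11}\approx\lambda$ and bound $u_{1i}$ for $i\ge 2$. This controls the $l=1$ slice of the trouble term. But the remaining ``tangential remainder'' $2g\sum_{l\ge 2}F^{ij}u_{li}h_{lj}$ still contains the full tangential Hessian $u_{li}$ with $l,i\ge 2$, for which the only available bound is $|u_{\xi\xi}|\le C_0(1+M)$ from Lemma~\ref{gl}. This remainder is therefore of order $C_0(1+M)\mathcal{F}$, which is \emph{the same order} as the dominating term $-(A+\sigma M)k_0\mathcal{F}$; it is not $O(\varepsilon\mathcal{F})$, and no Cauchy--Schwarz inequality will make it so. Likewise, the ``piece proportional to $\lambda$'' you combine with $-(A+\sigma M)F^{ij}h_{ij}$ yields roughly $(A+\sigma M)\bigl(2F^{1j}h_{1j}-F^{ij}h_{ij}\bigr)$, whose sign is not under control in your frame since $F^{ij}$ is not diagonal there. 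The cross term $-2\beta|Dh|^2\lambda F^{11}$ looks promising, but in your frame $F^{11}$ need not be comparable to $\mathcal{F}$, so it cannot absorb a $C_0(1+M)\mathcal{F}$ error either.

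The paper's remedy is different and is where the real work lies. One diagonalizes $D^2u$ (so that $F^{ij}$ is diagonal), splits indices into $\mathbf{G}=\{i:\beta h_i^2\ge k_0/2\}$ and $\mathbf{B}$, and uses $P_i(x_0)=0$ only for $i\in\mathbf{G}$ to obtain $u_{ii}\in[\tfrac{A}{3}+\tfrac{2\sigma M}{3},\,\tfrac{4A}{3}+\sigma M]$. After elementary manipulations the inequality reduces to
\[
-\frac{\beta}{2n}F^{11}u_{11}+k_1\sum_{u_{ii}\ge 0}F^{ii}u_{ii}\le 0
\]
for a fixed $1\in\mathbf{G}$. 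This is then established through a four-case analysis depending on the sign/size of $u_{nn}$ and on whether $\sigma_{k-1}(\lambda|1)\gtrless \delta_1(-u_{nn})\sigma_{k-2}(\lambda|1n)$: in one branch one shows $F^{11}\ge c\,\mathcal{F}$, in another one uses the identity $F^{11}u_{11}=f-\sigma_k(\lambda|1)$ together with a quantitative upper bound on $\sigma_k(\lambda|1)$. These $\sigma_k$-specific algebraic inequalities are precisely the ``extract a good term'' step advertised in the introduction, and they cannot be replaced by a generic argument. Finally, the choice $\sigma=\tfrac12$ is not a delicate interior calibration---any fixed $\sigma>0$ works inside $\Omega_\mu$ once $\beta,A$ are adjusted---its role is only to close $M\le C+\sigma M$ in Theorem~\ref{nn}.
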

\begin{proof}
We use maximum principle to prove this lemma.
First we assume the function attains its minimum point $x_0$ in the interior of $\Omega_{\mu}$.
We derivative this function twice,
\begin{equation}\label{p1}
  P_i = g_i(\sum\limits_l u_l h_l - \psi) + g(\sum\limits_l u_{li}h_l +\sum\limits_l u_l h_{li}-\psi_i)-G_i,
\end{equation}
and
\begin{eqnarray}\label{p2}
  P_{ij} &=& g_{ij}(\sum\limits_l u_l h_l - \psi) + g_i(\sum\limits_l u_{lj}h_l + \sum\limits_l u_l h_{lj}-\psi_j) \nonumber\\
  & & +g_j(\sum\limits_l u_{li}h_l + \sum\limits_l u_l h_{li}-\psi_i)-G_{ij}\\
  & &+g(\sum\limits_l u_{lij}h_l + \sum\limits_l u_{li}h_{lj} + \sum\limits_l u_{lj}h_{li}+\sum\limits_l u_l h_{lij}-\psi_{ij}). \nonumber
\end{eqnarray}
At the minimum point $x_0$, as before we can assume $(u_{ij}(x_0))$ is diagonal. Contracting \eqref{p2} with $F^{ij}$, we get
\begin{eqnarray}
  F^{ij} P_{ij} &=& F^{ij}g_{ij}(\sum\limits_l u_l h_l - \psi)+ 2 g_i F^{ij} (\sum\limits_{l} u_{lj} h_l + \sum\limits_l u_l h_{lj}-\psi_j)  \nonumber\\
   & & +gF^{ij}(\sum\limits_l u_{lij} h_l + 2 \sum\limits_l u_{li} h_{lj}+ \sum\limits_l u_l h_{lij}-\psi_{ij})  \nonumber\\
   & &-F^{ij}G_{ij} \nonumber\\
   & \leq &   \beta C_3(||u||_{C^1},||\partial \Omega||_{C^3},||\varphi||_{C^2},||f||_{C^1}) (\mathcal{F} + 1)\\\label{p1a}
   & &- (A+\sigma M)k_0 (\mathcal{F}+1) - 2 \beta F^{ii} u_{ii} h_i^2 + 2 F^{ii}u_{ii} h_{ii}g. \nonumber
\end{eqnarray}
Where in the second inequality we use
\begin{equation}\label{eps}
  |\beta h| \leq \beta \frac{\mu}{2} \leq \frac{1}{2},
\end{equation}
which in turn implies that
\begin{equation}\label{g}
1 \leq g \leq \frac{3}{2}.
\end{equation}
We choose $\mu \leq \frac{1}{\beta}$ in \eqref{eps}.\\
Then we divided the index $1\le i \le n$ into two categories.\\
(\textrm{i}) If
\begin{equation}\label{}
  |\beta h_i^2| \leq \frac{k_0}{2},
\end{equation}
we say $i \in \bf B$.\\
We choose $\beta \geq 2 n k_0$, in order to let
\begin{equation}\label{hi}
|h_i^2|\leq \frac{1}{4n}.
\end{equation}

\noindent(\textrm{ii}) If
\begin{equation}\label{}
  |\beta h_i^2| \geq \frac{k_0}{2},
\end{equation}
we denote $i \in \bf G$.\\
For any $i \in \bf G$, we use $P_i(x_0)=0$ to get
\begin{equation}\label{}
  u_{ii} = \frac{A+\sigma M}{g} + \frac{ \beta(\sum\limits_l u_l h_l -\psi)}{g} - \frac{\sum\limits_l u_l h_{li}}{h_i} + \frac{\psi_i}{h_i}.
\end{equation}
Because $|h_i|^2> \frac{k_0}{2\beta}$ and \eqref{g}, we have that
\begin{equation}\label{s}
  \mid \frac{\beta(\sum\limits_l u_l h_l - \psi)}{g}-\frac{\sum\limits_l u_l h_{li}}{h_i}+\frac{\psi_i}{h_i}\mid \leq \beta C_4(k_0 ,||u||_{C^1},||\partial \Omega||_{C^2},||\varphi||_{C^1}).
\end{equation}
By chosen $A$ large such that $\frac{A}{3} \geq \beta C_4$, we infer
\begin{equation}\label{uG}
  \frac{4A}{3} + \sigma M \geq u_{ii} \geq \frac{A}{3} + \frac{2 \sigma M}{3}, \indent for \indent i \in \bf G.
\end{equation}
Due to $2 \geq |D h| \geq \frac{1}{2}$ and \eqref{hi}, there is a $i_0 \in \bf G$, say $i_0 = 1$, such that
\begin{equation}\label{}
  h_{1}^2 \geq \frac{1}{4 n}.
\end{equation}
Then we continue to compute the equation of $P$,
\begin{eqnarray}\label{132}
  F^{ij}P_{ij} &\leq& [\beta C_3-(A+\sigma M) k_0] (\mathcal{F}+1) \nonumber\\
   & & - 2 \beta \sum\limits_{i\in \bf G}F^{ii}u_{ii}h_i^2 - 2 \beta \sum\limits_{i\in \bf B}F^{ii}u_{ii} h_i^2\nonumber \\
   & & + k_1 \sum\limits_{u_{ii} \geq 0} F^{ii}u_{ii} + 4 k_0 \sum\limits_{u_{ii}<0}F^{ii} u_{ii}.
   \end{eqnarray}
Since
\begin{equation}\label{132a}
- 2 \beta \sum\limits_{i\in \bf G}F^{ii}u_{ii}h_i^2 \le - 2 \beta F^{11}u_{11}h_1^2 \le - \frac{\beta}{2 n} F^{11} u_{11},
\end{equation}
and
\begin{equation}\label{132b}
- 2 \beta \sum\limits_{i\in \bf B}F^{ii}u_{ii} h_i^2 \le - 2 \beta \sum\limits_{i\in \bf {B}, u_{ii} < 0}F^{ii}u_{ii} h_i^2 \le -k_0\sum\limits_{ u_{ii} < 0}F^{ii}u_{ii},
\end{equation}
it follows that
\begin{equation}\label{132c}
- 2 \beta \sum\limits_{i\in \bf G}F^{ii}u_{ii}h_i^2 - 2 \beta \sum\limits_{i\in \bf B}F^{ii}u_{ii} h_i^2 + 4 k_0 \sum\limits_{u_{ii}<0}F^{ii} u_{ii} \le - \frac{\beta}{2 n} F^{11} u_{11}.
\end{equation}
From \eqref{132} and \eqref{132c}, we have
\begin{eqnarray}\label{78}
  F^{ij}P_{ij}
   &\leq& [\beta C_3-(A+\sigma M)k_0] (\mathcal{F}+1) \label{77}\nonumber\\
   & & - \frac{\beta}{2 n} F^{11} u_{11}+  k_1 \sum\limits_{u_{ii} \geq 0} F^{ii}u_{ii}.
\end{eqnarray}

Now we analysis the above terms case by case. Without generality, we assume that $u_{22} \geq \cdots \geq u_{nn}$.\\
{\bf{Case 1:}} $u_{ii} \geq 0$, for all $i$.\\
  This is the most easy case. Using equation, we get
  \begin{equation}\label{}
   kf = \sum\limits_{u_{ii} \geq 0} F^{ii} u_{ii}.
  \end{equation}
  If we choose $A > \frac{(C_3 \beta+k_1 k\max f)}{k_0} $, then from  \eqref{78} we have
  \begin{equation}\label{81}
    F^{ij}P_{ij} < 0.
  \end{equation}
  In the following cases we can assume $u_{nn} <0$.\\
{\bf{Case 2:}} $\frac{k_0}{2 k_1} u_{11} \geq |u_{nn}|$.\\
  Due to equation, we have
  \begin{equation}\label{eq}
   kf = \sum\limits_{u_{ii} \geq 0} F^{ii}u_{ii} + \sum\limits_{u_{ii} < 0} F^{ii}u_{ii}.
  \end{equation}
  The terms in line \eqref{78} become
  \begin{eqnarray}\label{141}
    - \frac{\beta}{2 n} F^{11} u_{11}+  k_1 \sum\limits_{u_{ii} \geq 0} F^{ii}u_{ii} &\leq& k_1 (kf - \sum\limits_{u_{ii}<0} F^{ii} u_{ii})\nonumber \\
     &\leq & k_1 k f - k_1 \mathcal{F} u_{nn}\nonumber\\
     & \leq & k_1 k f +  \frac{k_0}{2} \mathcal{F} u_{11}\nonumber\\
     & \leq & k_1 k f +  k_0 \mathcal{F}  (\frac{2A}{3} + \frac{\sigma M}{2} ).
  \end{eqnarray}
  Using \eqref{141}, and choose $A > \frac{3(C_3 \beta+k_1k\max f)}{k_0} $ in \eqref{78}, then we obtain the result \eqref{81}.\\

  In the following cases we  assume $$u_{nn} <0, \quad |u_{nn}|\geq \frac{k_0}{2 k_1} u_{11}.$$ We denote $\lambda :=(u_{11},\cdots,u_{nn})$ and choose $A \geq 2\sigma$.\\
 {\bf{Case 3:}}  $\sigma_{k-1}(\lambda | 1) \geq \delta_1 (-u_{nn})\sigma_{k-2}(\lambda | 1 n)$,  for small positive constant $\delta_1$ chosen in later case.\\
  If $u_{11} \geq u_{22}$, we know from \eqref{inqu12} that,
  \begin{equation}\label{106}
    u_{11} \sigma_{k-2} (\lambda | 1n) \geq \frac{k-1}{n-1} \sigma_{k-1}(\lambda | n).
  \end{equation}
  Otherwise $u_{11} \leq u_{22}$, we have from \eqref{uG}, \eqref{F1}  and  \eqref{inqu12}  that
  \begin{eqnarray}
    u_{11} \sigma_{k-2} (\lambda | 1n) &\geq& (\frac{A}{3} + \frac{2 \sigma M}{3})\sigma_{k-2} (\lambda | 2n) \nonumber\\
     &\geq& \frac{2\sigma }{3C_0}u_{22}\sigma_{k-2} (\lambda | 2n) \nonumber\\
     &\geq& \frac{k-1}{n-1} \frac{2\sigma}{3C_0}\sigma_{k-1}(\lambda | n).\label{109}
  \end{eqnarray}
  We infer from the hypothesis
  \begin{eqnarray}
    F^{11} &=& \sigma_{k-1}(\lambda | 1) \nonumber\\
     &\geq& \delta_1 (-u_{nn}) \sigma_{k-2}(\lambda|1n)\nonumber\\
     &\geq&  \delta_1 \frac{k_0}{2  k_1} u_{11} \sigma_{k-2}(\lambda|1n).\label{112}
  \end{eqnarray}
  Note we only use hypothesis of Case 3 in the first inequality above.\\

Using \eqref{in5}, and assumption $u_{nn} < 0$, we have from \eqref{8} that
\begin{equation}\label{inq13}
\frac{1}{n-k+1} \mathcal{F} \leq F^{nn}.
\end{equation}
  Assuming $C_0 \geq 1$ such that $\sigma = \frac{1}{2} \leq \frac{3C_0}{2}$, then we substitute \eqref{106} and \eqref{109} into \eqref{112}, and using \eqref{inq13},
  \begin{eqnarray}
   F^{11}&\ge &  \delta_1 \frac{k_0}{2  k_1} u_{11} \sigma_{k-2}(\lambda|1n)\nonumber\\
   &\geq&  \delta_1 \frac{k-1}{n-1} \frac{k_0}{2 k_1} \frac{2\sigma}{3C_0}\sigma_{k-1}(\lambda | n) \label{92}\nonumber\\
     & \geq&  \frac{k-1}{(n-1)(n-k+1)}\frac{ k_0\delta_1 \sigma}{3k_1C_0}\mathcal{F}.
  \end{eqnarray}
 Using \eqref{F1}, and  we choose $\beta \geq \frac{9 n(n-k+1)(n-1) k_1^2 C_0^2}{(k-1)k_0 \delta_1 \sigma^2}$, such that for the last two terms in \eqref{78} we have
  \begin{eqnarray}\label{147}
   &&- \frac{\beta}{2 n} F^{11} u_{11}+  k_1 \sum\limits_{u_{ii} \geq 0} F^{ii}u_{ii}\nonumber\\&\leq& [-\frac{(k-1)\beta k_0 \delta_1 \sigma}{6 n(n-1)(n-k+1) k_1 C_0}(\frac{A}{3}+\frac{2\sigma M}{3})\nonumber\\
  & & + k_1 C_0 (M+1)] \mathcal{F}\nonumber\\
   &\leq& [-(\frac{(k-1)\beta k_0 \delta_1 \sigma^2}{9n(n-k+1)(n-1) k_1 C_0}- k_1C_0)M\nonumber\\
   & &+(-\frac{ A \beta k_0(k-1)\delta_1 \sigma}{18n(n-k+1)(n-1)k_1 C_0}+k_1 C_0)]\mathcal{F} \nonumber\\
   &\leq& 0.
  \end{eqnarray}
  So choose $A > \frac{(C_3 \beta+k_1 k\max f)}{k_0}+2\sigma $  in \eqref{78}, and using \eqref{147}, we obtain the inequality \eqref{81}.\\
{\bf{Case 4:}} $0 \leq \sigma_{k-1}(\lambda |1) \leq \delta_1 (-u_{nn})\sigma_{k-2}(\lambda|1n)$.\\
  By hypothesis and for $i \ge 2,$
   \begin{eqnarray}\label{148}
   \sigma_{k-1}(\lambda | 1)-u_{ii}\sigma_{k-2}(\lambda | 1i)= \sigma_{k-1}(\lambda | 1i).
  \end{eqnarray}
   We compute as follows,
  \begin{eqnarray}
    k \sigma_k(\lambda | 1) &=& \sum\limits_{i=2}^n u_{ii}\sigma_{k-1}(\lambda | 1i)  \nonumber\\
     &\leq&\sum\limits_{u_{ii}\geq 0, i\neq1} u_{ii}[\delta_1(-u_{nn}\sigma_{k-2}(\lambda|1n))
     -u_{ii}\sigma_{k-2}(\lambda|1i)]\nonumber\\
     & &+\sum\limits_{u_{ii}<0,i\neq1} u_{ii}(-u_{ii}\sigma_{k-2}(\lambda|1i)) \nonumber\\
     &\leq& -u_{nn}\sum\limits_{u_{ii}\geq0,i\neq1}\delta_1 u_{ii}\sigma_{k-2}(\lambda|1n) - u_{nn}^2\sigma_{k-2}(\lambda|1n) \nonumber\\
     &\leq& -n \delta_1 u_{nn} u_{22} \sigma_{k-2}(\lambda|1n) - u_{nn}^2 \sigma_{k-2}(\lambda|1n).
  \end{eqnarray}
  Using \eqref{F1} and \eqref{uG}, we continue
  \begin{eqnarray}
    k\sigma_{k}(\lambda|1) &\leq& -n\delta_1 C_0(M+1) u_{nn}\sigma_{k-2}(\lambda|1n)-u_{nn}^2\sigma_{k-2}
    (\lambda|1n)  \nonumber\\
     &\leq&  -n\delta_1 C_0 \frac{3}{2\sigma}u_{11}u_{nn} \sigma_{k-2}(\lambda|1n) - u_{nn}^2\sigma_{k-2}(\lambda|1n)\nonumber\\
     &\leq& \frac{3k_1 n\delta_1 C_0}{k_0 \sigma} u_{nn}^2 \sigma_{k-2}(\lambda|1n) - u_{nn}^2 \sigma_{k-2}(\lambda|1n).
  \end{eqnarray}
  Now we let $\delta_1 = \frac{k_0 \sigma}{6k_1 n C_0}$. As in \eqref{112} and \eqref{92}, we obtain
  \begin{eqnarray}
    k\sigma_{k}(\lambda|1) &\leq& -\frac{u_{nn}^2}{2}\sigma_{k-2}(\lambda|1n)\nonumber \\
     &\leq&  u_{nn} \frac{k-1}{(n-k+1)(n-1)}\frac{\sigma k_0 }{6k_1 C_0}\mathcal{F}\nonumber\\
     &\leq& -\frac{k-1}{(n-k+1)(n-1)} \frac{\sigma k_0^2 }{12k_1^2 C_0} u_{11}\mathcal{F}.
  \end{eqnarray}
  Inserting \eqref{uG} into above inequality, we have
  \begin{eqnarray}
    -\frac{\beta}{2n} F^{11}u_{11} &=& -\frac{\beta}{2n} (f - \sigma_k (\lambda |1)) \\
     &\leq&  -\frac{\beta}{2n} f - \frac{\beta \sigma k_0^2} {24  k n (n-k+1) k_1^2 C_0}\frac{k-1}{n-1} (\frac{A}{3}+\frac{2\sigma M}{3})\mathcal{F}. \nonumber
  \end{eqnarray}
  If we choose $\beta \geq \frac{36 k n(n-k+1)(n-1) k_1^3 C_0^2}{(k-1)\sigma^2 k_0^2 }$ , such that for the last two terms in \eqref{78} we get
  \begin{equation}\label{154}
    - \frac{\beta}{2 n} F^{11} u_{11}+  k_1 \sum\limits_{u_{ii} \geq 0} F^{ii}u_{ii} \leq -\frac{\beta}{2n} f < 0.
  \end{equation}
  Finally, we choose $A  \geq \frac{3(C_3 \beta +k_1 k\max f)}{k_0}$ in \eqref{78}, and using \eqref{154}, we obtain the inequality \eqref{81} which contradicts with $0 \leq F^{ij}P_{ij}$ at minimum point $x_0$.

Then the function $P$ attains its minimum on the boundary of $\Omega_{\mu}$.\\
 Now we treat the boundary value of $P$.  On $\partial \Omega$, it is easily to see
\begin{equation}\label{}
  P = 0.
\end{equation}
On the $\partial \Omega_{\mu} / \partial \Omega$, we have
\begin{equation}\label{}
  P \geq - C_5(k, \max f, ||u||_{C^1}, ||\varphi||_{C^0})+(A+\sigma M)\frac{\mu}{2} \geq 0,
\end{equation}
provided $A \geq \frac{2 C_5}{\mu}$.\\
We conclude that we first choose $\delta_1 = \frac{k_0 \sigma}{6k_1 n C_0}$, then $\beta= \frac{36k n(n-k+1)(n-1)k_1^3 C_0^2}{(k-1)\sigma^2 k_0^2} + \frac{9n(n-k+1)(n-1)k_1^2 C_0^2}{(k-1)k_0 \delta_1 \sigma^2}+2nk_0$ , then $\mu=\min \{ \mu_0, \frac{1}{\beta} \}$, finally $A= \frac{3(C_3\beta +k_1 k \max f)}{k_0} +3\beta C_4+ 2\sigma +1+\frac{2C_5}{\mu}$. Using  the maximal principle for the function $P(x)$,  we get
\begin{equation*}\label{}
  P(x) \geq 0, \quad \text {in }\quad \Omega.
\end{equation*}
\end{proof}
Similarly, we can also find super barrier function of $u_{\nu}$.
\begin{lemma}
 Let $\overline{P} := g(x)(Du \cdot Dh(x) - \psi(x))+ G(x)$. Fix $\sigma =\frac{1}{2} $, for any $x \in \overline{\Omega}_{\mu}$ , if chosen $\beta$ large, $\mu$ small, $A$ large in proper sequence, we have
\begin{equation}\label{}
  \overline{P}(x) \leq 0.
\end{equation}
\end{lemma}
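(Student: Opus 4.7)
My strategy parallels Lemma~\ref{3.2}, exploiting the fact that $\overline{P}$ differs from $P$ only in the sign of the $G$ term. I would apply the maximum principle: suppose $\overline{P}$ attains its maximum at an interior point $x_0 \in \Omega_\mu$. Then $\overline{P}_i(x_0) = 0$ for all $i$, and since $(F^{ij})$ is positive definite and $D^2 \overline{P}(x_0) \leq 0$, necessarily $F^{ij}\overline{P}_{ij}(x_0) \leq 0$. The plan is to derive a contradiction by showing $F^{ij}\overline{P}_{ij}(x_0) > 0$ with properly chosen constants.

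Computing $\overline{P}_i, \overline{P}_{ij}$ exactly as in \eqref{p1}, \eqref{p2} with $-G$ replaced by $+G$, diagonalizing $(u_{ij})$ at $x_0$, and splitting the indices into $\mathbf{B}$ and $\mathbf{G}$ based on the size of $|h_i|^2$ as before, the equation $\overline{P}_i(x_0) = 0$ yields
\begin{equation*}
u_{ii} = -\frac{A+\sigma M}{g} + \frac{\beta(\sum_l u_l h_l - \psi)}{g} - \frac{\sum_l u_l h_{li}}{h_i} + \frac{\psi_i}{h_i}, \qquad i \in \mathbf{G}.
\end{equation*}
With $A \geq 3\beta C_4$, the analog of \eqref{s} then forces $-\tfrac{4A}{3} - \sigma M \leq u_{ii} \leq -\tfrac{A}{3} - \tfrac{2\sigma M}{3}$ for $i \in \mathbf{G}$: the $\mathbf{G}$-eigenvalues are very negative, mirror image of \eqref{uG}.

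Substituting into $F^{ij}\overline{P}_{ij}$ and tracking the sign change on the $G$ contribution, I obtain
\begin{equation*}
F^{ij}\overline{P}_{ij} \geq -\beta C_3(\mathcal{F}+1) + (A+\sigma M)k_0(\mathcal{F}+1) - 2\beta F^{ii} u_{ii} h_i^2 + 2g F^{ii} u_{ii} h_{ii}.
\end{equation*}
Now $(A+\sigma M)k_0(\mathcal{F}+1)$ works in our favor, and for $i \in \mathbf{G}$ the term $-2\beta F^{ii}u_{ii}h_i^2$ is large and positive since $u_{ii} \leq -A/3$ and $h_i^2 \geq k_0/(2\beta)$. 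The delicate term is $2g F^{ii}u_{ii}h_{ii}$, very negative for $i \in \mathbf{G}$. Running the analog of Cases~1--4 of Lemma~\ref{3.2}---using the admissibility $\lambda(D^2 u) \in \Gamma_k$, the MacLaurin-type inequalities \eqref{inqu12}--\eqref{inqu14}, and the global upper bound $u_{\xi\xi} \leq C_0(1+M)$ from Lemma~\ref{gl}---I expect to establish $F^{ij}\overline{P}_{ij}(x_0) > 0$, contradicting the maximum principle.

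The main obstacle is this case analysis: the algebraic structure is dual to the sub-barrier situation, and one must verify, case by case and with the same hierarchical order $\delta_1, \beta, \mu, A$ of constant choices, that the favorable terms dominate. Once the interior maximum is ruled out, $\overline{P}$ attains its maximum on $\partial\Omega_\mu$. The Neumann condition gives $Du \cdot Dh = u_\nu = \varphi = \psi$ and $G = 0$ on $\partial\Omega$, so $\overline{P} = 0$ there. On $\partial\Omega_\mu \setminus \partial\Omega$, one has $G \leq -(A+\sigma M)\mu/2$ while $|g(Du\cdot Dh - \psi)| \leq C_5$, giving $\overline{P} \leq C_5 - (A+\sigma M)\mu/2 \leq 0$ for $A$ chosen large enough. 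Hence $\overline{P} \leq 0$ throughout $\overline{\Omega}_\mu$.
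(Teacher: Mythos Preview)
Your overall framework is correct: maximum principle, diagonalization at $x_0$, the $\mathbf{B}/\mathbf{G}$ split, the conclusion $u_{ii}\le -\tfrac{A}{3}-\tfrac{2\sigma M}{3}$ for $i\in\mathbf{G}$, and the boundary discussion are all exactly as in the paper. However, your plan for the interior step --- ``running the analog of Cases~1--4 of Lemma~\ref{3.2}'' --- is both unnecessary and not obviously viable, and misses the key simplification that makes the super-barrier case far easier than the sub-barrier case.

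The point is that here $u_{11}<0$ for the distinguished index $1\in\mathbf{G}$. Since $\lambda(D^2u)\in\Gamma_k$, \eqref{inqu14} forces the $k$ largest eigenvalues to be positive, so $u_{11}$ sits among the small eigenvalues; then \eqref{inqu13} immediately yields $F^{11}\ge c(n,k)\,\mathcal{F}$. With this single observation the paper closes the argument in two lines: from the analogue of \eqref{78},
\[
F^{ij}\overline{P}_{ij}\ \ge\ [-\beta C_6+(A+\sigma M)k_0](\mathcal{F}+1)\ -\ \tfrac{\beta}{2n}F^{11}u_{11}\ +\ k_1\sum_{u_{ii}<0}F^{ii}u_{ii},
\]
and since $-\tfrac{\beta}{2n}F^{11}u_{11}\ge \tfrac{\beta c}{2n}\mathcal{F}\bigl(\tfrac{A}{3}+\tfrac{2\sigma M}{3}\bigr)$ while $\sum_{u_{ii}<0}F^{ii}u_{ii}\ge -\mathcal{F}\,C_0(1+M)$ by Lemma~\ref{gl} (combined with $\Gamma_k\subset\Gamma_1$), choosing $\beta\ge \tfrac{3nk_1C_0}{c\sigma}$ and $A$ large gives $F^{ij}\overline{P}_{ij}>0$ directly. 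No case analysis is needed.

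By contrast, Cases~1--4 in Lemma~\ref{3.2} were specifically engineered for the opposite regime $u_{11}>0$, where $u_{11}$ may be the \emph{largest} eigenvalue and $F^{11}$ could be small; the four cases are precisely the work required to manufacture a lower bound on $F^{11}$ in that setting. Transporting those cases ``dually'' to the present situation is not a mechanical sign flip --- the hypotheses of Cases~2--4 involve comparisons between a large positive $u_{11}$ and $|u_{nn}|$ that have no obvious counterpart when $u_{11}$ is itself large negative --- so your sentence ``I expect to establish\ldots'' hides real work that the paper avoids entirely.
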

\begin{proof}
We assume the function attains its maximum point $x_0$ in the interior of $\Omega_{\mu}$.
We derivative this function twice,
\begin{equation}\label{pp1}
  \overline{P}_i = g_i(\sum\limits_l u_l h_l - \psi) + g(\sum\limits_l u_{li}h_l +\sum\limits_l u_l h_{li}-\psi_i)+G_i,
\end{equation}
and
\begin{eqnarray}\label{pp2}
  \overline{P}_{ij} &=& g_{ij}(\sum\limits_l u_l h_l - \psi) + g_i(\sum\limits_l u_{lj}h_l + \sum\limits_l u_l h_{lj}-\psi_j)\nonumber\\
  & & +g_j(\sum\limits_l u_{li}h_l + \sum\limits_l u_l h_{li}-\psi_i)+G_{ij}\nonumber\\
  & &+g(\sum\limits_l u_{lij}h_l + \sum\limits_l u_{li}h_{lj} + \sum\limits_l u_{lj}h_{li}+\sum\limits_l u_l h_{lij}-\psi_{ij}).
\end{eqnarray}
At the maximum point $x_0$, as before we can assume $(u_{ij}(x_0))$ is diagonal. Contracting \eqref{pp2} with $F^{ij}$, we get
\begin{eqnarray}
  F^{ij} \overline{P}_{ij} &=& F^{ij}g_{ij}(\sum\limits_l u_l h_l - \psi)+ 2 g_i F^{ij} (\sum\limits_{l} u_{lj} h_l + \sum\limits_l u_l h_{lj}-\psi_j) \nonumber\\
   & & +gF^{ij}(\sum\limits_l u_{lij} h_l + 2 \sum\limits_l u_{li} h_{lj}+ \sum\limits_l u_l h_{lij}-\psi_{ij}) +F^{ij}G_{ij}\nonumber\\
   & \geq &   -\beta C_6(||u||_{C^1},||\partial \Omega||_{C^3},||\varphi||_{C^2},||f||_{C^1}) (\mathcal{F} + 1)\nonumber\\
   & &+ (A+\sigma M)k_0 (\mathcal{F}+1) - 2 \beta F^{ii} u_{ii} h_i^2 + 2 F^{ii}u_{ii} h_{ii}g.
\end{eqnarray}
As before we divided the index $1\le i \le n $ into two categories.\\
(\textrm{i}) If
\begin{equation}\label{}
  |\beta h_i^2| \leq \frac{k_0}{2},
\end{equation}
we say $i \in \bf B$.\\
We choose $\beta \geq 2 n k_0$, in order to let
\begin{equation}\label{phi}
|h_i^2|\leq \frac{1}{4n}.
\end{equation}

\noindent(\textrm{ii}) If
\begin{equation}\label{}
  |\beta h_i^2| \geq \frac{k_0}{2},
\end{equation}
we denote $i \in \bf G$.\\
For any $i \in \bf G$, we use $\overline{P}_i(x_0)=0$ to get
\begin{equation}\label{167}
  u_{ii} = -\frac{A+\sigma M}{g} + \frac{ \beta(\sum\limits_l u_l h_l -\psi)}{g} - \frac{\sum\limits_l u_l h_{li}}{h_i} + \frac{\psi_i}{h_i}.
\end{equation}
Because $|h_i|^2> \frac{k_0}{2\beta}$ and \eqref{167}, we have that
\begin{equation}\label{ps}
  \mid \frac{\beta(\sum\limits_l u_l h_l - \psi)}{g}-\frac{\sum\limits_l u_l h_{li}}{h_i}+\frac{\psi_i}{h_i}\mid \leq \beta C_4(k_0 ,||u||_{C^1},||\partial \Omega||_{C^2},||\varphi||_{C^1}).
\end{equation}
By chosen $A$ large such that $\frac{A}{3} \geq \beta C_4$, we infer
\begin{equation}\label{puG2}
  - \frac{4A}{3} - \sigma M \leq u_{ii} \leq -\frac{A}{3} - \frac{2 \sigma M}{3}, \indent for \indent i \in \bf G.
\end{equation}
Due to $2 \geq |D h| \geq \frac{1}{2}$ and \eqref{phi}, there is a $i_0 \in \bf G$, say $i_0 = 1$, such that
\begin{equation}\label{}
  h_{1}^2 \geq \frac{1}{4 n}.
\end{equation}
Then we continue to compute the equation of $\overline{P}$,
\begin{eqnarray}
  F^{ij}\overline{P}_{ij} &\geq& [-\beta C_6+(A+\sigma M) k_0] (\mathcal{F}+1)  \nonumber\\
   & & - 2 \beta \sum\limits_{i\in \bf G}F^{ii}u_{ii}h_i^2 - 2 \beta \sum\limits_{i\in \bf B}F^{ii}u_{ii} h_i^2  \nonumber\\
   & & + 4k_0 \sum\limits_{u_{ii} \geq 0} F^{ii}u_{ii} + k_1 \sum\limits_{u_{ii}<0}F^{ii} u_{ii}.  \label{p78}
\end{eqnarray}
We treat some terms in last formula, first
\begin{equation}\label{171a}
  - 2 \beta \sum\limits_{i\in \bf G}F^{ii}u_{ii}h_i^2 \ge - 2 \beta F^{11}u_{11}h_1^2 \ge - \frac{\beta}{2 n} F^{11} u_{11},
\end{equation}
then
\begin{eqnarray}\label{171b}
&&- 2 \beta \sum\limits_{i\in \bf B}F^{ii}u_{ii} h_i^2 \ge - 2 \beta \sum\limits_{i\in \bf B, u_{ii} \ge 0}F^{ii}u_{ii} h_i^2 \nonumber\\
 && \ge -k_0 \sum\limits_{i\in \bf B, u_{ii} \ge 0}F^{ii}u_{ii}= -k_0 \sum\limits_{ u_{ii} \ge 0}F^{ii}u_{ii} .
\end{eqnarray}
It follows that
\begin{eqnarray}
   &&- 2 \beta \sum\limits_{i\in G}F^{ii}u_{ii}h_i^2 - 2 \beta \sum\limits_{i\in B}F^{ii}u_{ii} h_i^2 \nonumber\\
   && + 4k_0 \sum\limits_{u_{ii} \geq 0} F^{ii}u_{ii} + k_1 \sum\limits_{u_{ii}<0}F^{ii} u_{ii} \nonumber\\
   &&\geq  - \frac{\beta}{2 n} F^{11} u_{11} +  k_1 \sum\limits_{u_{ii} < 0} F^{ii}u_{ii}. \label{171c}
\end{eqnarray}
Then we have
\begin{eqnarray}
  F^{ij}\overline{P}_{ij} &\geq&   [-\beta C_6+(A+\sigma M)k_0] (\mathcal{F}+1) \nonumber\\
   & &  - \frac{\beta}{2 n} F^{11} u_{11}+  k_1 \sum\limits_{u_{ii} < 0} F^{ii}u_{ii}. \label{p78a}
\end{eqnarray}

This is easy when $u_{11} < 0$, because we have by \eqref{inqu14} and \eqref{inqu13}
\begin{equation}\label{}
  F^{11} \geq c(k,n) \mathcal{F}.
\end{equation}
From \eqref{F1} and \eqref{puG2} we obtain
\begin{equation}\label{pp78}
 - \frac{\beta}{2 n} F^{11} u_{11}+  k_1 \sum\limits_{u_{ii} < 0} F^{ii}u_{ii} \geq \frac{\beta c}{2n}\mathcal{F}(\frac{A}{3}+\frac{2\sigma M}{3})-k_1 \mathcal{F} C_0 (1+M).
\end{equation}
If we choose $\beta \geq \frac{3 n k_1 C_0}{c \sigma}$ and $A\geq 2\sigma + \frac{\beta C_6}{k_0}$ , then by \eqref{p78a} and \eqref{pp78} we get
\begin{equation}\label{}
  F^{ij}\overline{P}_{ij} >0.
\end{equation}

Then the function $\overline{P}$ attains its maximum on the boundary of $\Omega_{\mu}$.\\
On $\partial \Omega$, it is easily to see
\begin{equation*}\label{}
  \overline{P} = 0.
\end{equation*}
On the $\partial \Omega_{\mu} / \partial \Omega$, we have
\begin{equation}\label{}
  \overline{P} \le  C_7(k, \max f, ||u||_{C^1}, ||\varphi||_{C^0})-(A+\sigma M)\frac{\mu}{2} \le 0,
\end{equation}
provided $A \geq \frac{2 C_7}{\mu}$.\\
We conclude that we first choose  $\beta \geq \frac{3 n k_1 C_0}{c \sigma}$ , then $\mu=\min \{ \mu_0, \frac{1}{\beta} \}$, finally $A\geq 2\sigma + \frac{\beta C_6}{k_0} +3\beta C_4+ 1 +\frac{2C_7}{\mu}$. Using the maximal principle for the function $\overline{P}(x)$,  we get
\begin{equation*}\label{}
  \overline{P}(x) \le 0, \quad \text {in }\quad \Omega.
\end{equation*}
\end{proof}

Using the barrier functions, we have the main normal-normal second derivative estimate in this section.
\begin{theorem}\label{nn}
  Let $\Omega$ be a bounded $C^{4}$ uniformly convex domain in $\mathbb{R}^n$, $\nu$ is the outer unit normal vector of $\p \O$. If $u\in C^4(\Omega)\cap C^3(\overline{\Omega})$ a $k$- admissible solution of Neumann problem \eqref{NP}. Where $f\in C^{2}(\overline{\Omega}\times\mathbb{R})$ is positive and $\varphi \in C^{3}(\overline{\Omega}\times\mathbb{R})$ is non-increasing in $z$. Then we have
  \begin{equation}\label{}
    \sup\limits_{\partial \Omega} |u_{\nu\nu}| \leq C,
  \end{equation}
where constant $C$ depends on $n$, $k$, $||u||_{C^1}$, $\min f$, $||\varphi||_{C^3}$, $||f||_{C^2}$, convexity of $\partial \Omega$ and $||\partial \Omega||_{C^4}$.
\end{theorem}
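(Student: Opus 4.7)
The plan is to combine the sub-barrier $P$ and super-barrier $\bar{P}$ furnished by the two preceding lemmas with a Hopf-type boundary argument. First I observe that both barriers vanish identically on $\p\Omega$: there $h=0$, so $G=0$ and $g=1$; moreover $Dh=\nu$ and $|Dh|=1$, so $Du\cdot Dh - \psi = u_\nu - \varphi = 0$ by the Neumann condition. Combined with $P\geq 0$ and $\bar{P}\leq 0$ throughout $\overline{\Omega}_\mu$, this gives the one-sided Hopf inequalities $P_\nu(y) \leq 0$ and $\bar{P}_\nu(y) \geq 0$ for every $y \in \p\Omega$.

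Next I would differentiate $P = g(Du\cdot Dh - \psi) - G$ and evaluate the normal derivative on the boundary. The prefactor $(Du\cdot Dh - \psi)$ vanishes there, so the surviving contribution is $\sum_i (u_{li}h_l + u_l h_{li} - \psi_i)\nu^i - G_\nu$. Since $h_l=\nu^l$ on $\p\Omega$, the principal term is $u_{li}\nu^l\nu^i = u_{\nu\nu}$, while $G_\nu = (A+\sigma M)h_\nu = A+\sigma M$ because $h_\nu = \nu\cdot\nu = 1$. The remaining pieces $u_l h_{l\nu} - \psi_\nu$ are controlled by $\|u\|_{C^1}$, $\|\p\Omega\|_{C^3}$, and $\|\varphi\|_{C^2}$. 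Therefore $P_\nu|_{\p\Omega} = u_{\nu\nu} - (A+\sigma M) + O(1)$, and $P_\nu \leq 0$ yields $u_{\nu\nu}(y) \leq A + \sigma M + C$ for every $y\in\p\Omega$. The identical computation for $\bar{P}$, together with $\bar{P}_\nu \geq 0$, produces the matching lower bound $u_{\nu\nu}(y) \geq -(A+\sigma M) - C$.

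Combining the two one-sided bounds gives $|u_{\nu\nu}(y)| \leq A + \sigma M + C$ for all $y\in\p\Omega$. Recalling $M := \sup_{\p\Omega}|u_{\nu\nu}|$ and that the constants $A,\beta,\mu$ fixed in the barrier lemmas are chosen only in terms of the controlled data (independently of $M$ itself), I take the supremum over $\p\Omega$ to obtain $M \leq A + \sigma M + C$. The crucial fact that $\sigma = \tfrac{1}{2} < 1$ allows the $\sigma M$ term to be absorbed, giving $M \leq 2(A+C)$, which is exactly the desired bound.

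The conceptual hard work has already been done in Lemmas~\ref{3.2} and the companion super-barrier lemma, where the bad term $\sum F^{ij}u_{ik}D_j\nu^k$ noted in the introduction is tamed by the careful case analysis in terms of the sign/size of the eigenvalues $u_{ii}$. Given these barriers in hand, the remaining step in the present theorem is essentially a Hopf-type boundary computation; the only subtle point is that the constant $M$ appearing in $G = (A+\sigma M)h$ must be reabsorbed at the end, which is precisely why the factor $\sigma < 1$ was built into the construction.
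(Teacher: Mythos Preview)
Your proposal is correct and follows essentially the same approach as the paper. The only cosmetic difference is that the paper evaluates the normal derivative of $P$ (resp.\ $\bar{P}$) at the single boundary point where $u_{\nu\nu}$ is maximal (resp.\ minimal), whereas you observe that the inequality $P_\nu\le 0$ holds at \emph{every} point of $\partial\Omega$ and then take the supremum; the resulting bound $M\le A+\sigma M+C$ and the absorption via $\sigma=\tfrac12$ are identical.
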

\begin{proof}
Assume $z_0$ is the maximum point of $u_{\nu\nu}$, we have
\begin{eqnarray}
  0 & \geq & P_{\nu}(z_0) \\
   &\geq & g (\sum\limits_l u_{l\nu} h_l + u_l h_{l\nu} - \psi_{\nu})-(A+\sigma M)h_{\nu}\\
   &\geq & u_{\nu\nu}-C(||u||_{C^1},||\partial \Omega||_{C^2},
   ||\psi||_{C^2})- (A+\sigma M)
\end{eqnarray}
In the second inequality we assume  $u_{\nu\nu}(z_0)\geq 0$. Then we get
\begin{equation}\label{}
  \sup\limits_{\partial \Omega} u_{\nu\nu} \leq  C+\sigma M.
\end{equation}
Similarly, by $0 \leq \overline{P}_{\nu}(z_0)$ here $z_0$ is the minimum point of $u_{\nu\nu}$, we get
\begin{equation}\label{}
   \inf\limits_{\partial \Omega} u_{\nu\nu} \geq  -C -\sigma M .
\end{equation}
So chosen $\sigma=\frac{1}{2}$ as in the previous lemmas, we get the estimate
\begin{equation}\label{}
    \sup\limits_{\partial \Omega} |u_{\nu\nu}| \leq C.
  \end{equation}
\end{proof}

 {\bf Proof of The Theorem~\ref{C^2}}: Combining Lemma~\ref{tn}, Lemma~\ref{gl} and the Theorem~\ref{nn}, we complete the proof of the Theorem~\ref{C^2}.

\section{Existence of the boundary problem}
In this section we complete the proof of the Theorem~\ref{main}. As in  \cite{LTU}, by combining Theorems \ref{C^0}, \ref{MQX} and \ref{C^2} with the global second derivative H\"{o}lder estimates (see \cite{LionT} or \cite{LT86}), we get a global estimate
\begin{equation}\label{}
  ||u||_{C^{2,\alpha}(\overline{\Omega})} \leq C
\end{equation}
for $k$- admissible solution, where $C$, $\alpha$ depending on $k$, $n$, $\Omega$, $||\Omega||_{C^4}$, $||f||_{C^2}$, $\min f$ and $||\varphi||_{C^3}$. Applying the method of continuity (see \cite{GT}, Theorem 17.28), we complete the proof of Theorem~ \ref{main}.

\end{document}